\newcolumntype{x}[1]{>{\centering\arraybackslash\hspace{0pt}}p{#1}}
\spnewtheorem{coro}[theorem]{Corollary}{\bfseries}{\itshape}
\newcommand{\zed}{{\ensuremath{\mathbb{Z}}}}
\newcommand{\eff}{{\ensuremath{\mathbb{F}}}}
\begin{document}

\title{Orthogonal and strong frame starters, revisited\thanks{D.R.\ Stinson's research is supported by  NSERC discovery grant RGPIN-03882.
}
\author{Douglas R.\ Stinson\inst{1}}}
\institute{David R.\ Cheriton School of Computer Science\\ University of Waterloo\\
Waterloo, Ontario, N2L 3G1, Canada \\ \email{dstinson@uwaterloo.ca}}
\authorrunning{D.R.\ Stinson}

\maketitle

\begin{abstract}
In this paper, I survey frame starters, as well as orthogonal and strong frame starters, in abelian groups. I mainly recall and re-examine existence and nonexistence results, but I will prove some new results as well. 

\keywords{frame starter, strong frame starter, orthogonal frame starters}
\end{abstract}

\section{Introduction}

At the \emph{Stinson '66} conference, Jeff Dinitz gave a very interesting and inspiring talk entitled
``Jeff and Doug's Excellent Math Adventure.''  In his talk, Jeff discussed several unresolved problems that arose in various joint papers we have written over the past 40+ years. There are many interesting research areas that have long-standing, interesting open problems, but which perhaps have been not so active in recent years. In this paper, I decided to revisit a research topic that was of considerable interest in the 1970's and 1980's, namely orthogonal and strong frame starters. I will mainly survey and re-examine existence and nonexistence results, but I will prove some new results as well. 

The rest of this paper is organized as follows.
Section \ref{fs.sec} discusses frame starters. Section \ref{strong.sec} addresses strong and orthogonal frame starters. In each section, I review past work and prove some new results. I should mention that strong starters give rise to Room squares and strong and orthogonal frame starters give rise to Room frames. I do not discuss these objects in this paper; however, information about orthogonal and strong starters  
can be found \cite{DS92a,GL75,Ho89,KSV85,LO10}. Section \ref{cyclic.sec} looks at strong frame starters in cyclic groups. Here I recall an old, but still unsolved, conjecture concerning the existence of  strong  starters in arbitrary abelian groups. I also propose a new conjecture relating to the existence of strong frame starters in cyclic groups. Section \ref{discuss.sec} is a brief discussion and conclusion to the paper.

\section{Frame Starters}
\label{fs.sec}

I begin with some definitions.

\begin{definition}
Let $G$ be an additive abelian group of order $g$ and let $H$ be a subgroup of $G$ of order $h$. A \emph{frame starter} in $G \setminus H$ is a set of $(g-h)/2$ pairs $\{ \{x_i,y_i \} : 1 \leq i \leq (g-h)/2\}$ that satisfies the following two properties:
\begin{enumerate}
\item $\{ x_i, y_i :  1 \leq i \leq (g-h)/2 \} = G \setminus H$.
\item $\{ \pm (x_i-y_i) :  1 \leq i \leq (g-h)/2 \} = G \setminus H$.
\end{enumerate}
This frame starter has \emph{type} $h^{g/h}$.
\end{definition}
Note that the pairs in the frame starter form a partition of $G \setminus H$, and the differences obtained from these pairs also partitions $G \setminus H$.

I should remark that I am always writing group operations additively in this paper. Also, although I am not considering nonabelian groups, I should mention that there are a few results pertaining to nonabelian groups that have been shown (see, for example, \cite{Gr76,Wang98}).

Here are three very elementary observations concerning frame starters.

\begin{lemma}
\label{elem.lem}
Let $G$ be an abelian group of order $g$ and let $H$ be a subgroup of $G$ of order $h$.
If there is a frame starter $S$ in $G \setminus H$, then the following hold:\vspace{-.1in}
\begin{enumerate}
\item $g-h$ is even.
\item $G \setminus H$ contains no elements of order $2$.
\item $g \geq 4h$.
\end{enumerate}
\end{lemma}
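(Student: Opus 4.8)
The plan is to treat the three conclusions separately, deriving (1) and (2) from a single counting remark about the list of differences and (3) by passing to the quotient $G/H$.

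For (1) and (2) the governing observation is that no difference lies in $H$: property 2 places every $x_i - y_i$ in $G \setminus H$, so in particular $x_i - y_i \neq 0$ and each $\{x_i, y_i\}$ is a genuine $2$-subset. Reading property 1 as a partition of $G \setminus H$ into these $2$-subsets gives $g - h = 2\cdot(g-h)/2$, which is even; this is (1). For (2) I would count the signed difference list $\{x_i - y_i,\, y_i - x_i : 1 \le i \le (g-h)/2\}$: it has exactly $g-h$ entries and, by property 2, equals the $(g-h)$-element set $G \setminus H$, so all its entries are distinct. If some $z \in G \setminus H$ had order $2$, then $z = -z$ would occur as some $x_i - y_i$, forcing the entry $y_i - x_i = -z = z$ to repeat it, contradicting distinctness; hence $G \setminus H$ contains no involution.

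For (3) I would apply the quotient homomorphism $\phi \colon G \to G/H$. Since each $x_i - y_i$ avoids $H$ we have $\phi(x_i) \neq \phi(y_i)$, and since $x_i, y_i \in G \setminus H$ both images are nonzero; thus every pair maps to two distinct nonzero elements of $G/H$. As $g > h$ there is at least one pair, so $G/H$ has at least two nonzero elements and $g/h = |G/H| \ge 3$, that is, $g \ge 3h$.

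The main obstacle is closing the gap between this clean bound and the asserted $g \ge 4h$, i.e.\ excluding the index-$3$ case. When $g = 3h$ we have $G/H \cong \zed_3$, its two nonzero cosets $C_1, C_2$ satisfy $C_2 = -C_1$, and the pairs are forced to form a perfect matching between $C_1$ and $C_2$; writing $x_i = a + s_i$, $y_i = b + \pi(s_i)$ with $s_i$ ranging over $H$, property 2 reduces to the requirement that $s \mapsto s - \pi(s)$ be a bijection of $H$, i.e.\ that $\pi$ be (up to negation) a complete mapping of $H$. This is where I expect the argument for the constant $4$ to stall, since such mappings do exist — for instance $\pi(s) = -s$ on $H = \zed_3$ produces a type $3^3$ frame starter in $\zed_9$, with $g = 3h$. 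I would therefore single out (3) for re-examination, checking whether an extra hypothesis is intended or whether the sharp necessary condition is in fact $g \ge 3h$.
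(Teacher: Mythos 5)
Your arguments for (1) and (2) are correct and essentially identical to the paper's: the paper disposes of (1) by the same partition count, and proves (2) by noting that an order-$2$ element $d$ occurring as a difference $x-y$ would force the repeated difference $y-x=d$, which is exactly your distinctness argument for the signed difference list. Your exclusion of $g=2h$ also matches the paper: a pair inside a single coset would have its difference in $H$.

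On (3), however, you have caught a genuine error in the paper rather than the reverse. For $g=3h$ the paper argues that any pair $\{x,y\}$ must have $x\in H+z$, $y\in H+z'$ with $z'\equiv -z \pmod H$, and then concludes ``$x+y\in H$, which is not allowed.'' But nothing in the definition of a frame starter constrains sums; the condition $x_i+y_i\notin H$ is the defining property of a \emph{strong} frame starter, so the paper's proof of the index-$3$ exclusion is invalid for plain frame starters. Your counterexample is correct: with $G=\zed_9$, $H=\{0,3,6\}$ and $\pi(s)=-s$ one gets the pairs $\{1,2\},\{4,8\},\{5,7\}$, whose differences $\pm1,\pm4,\pm2$ cover $\{1,8,4,5,2,7\}=G\setminus H$, a frame starter of type $3^3$. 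In fact its existence is already guaranteed by Wang's patterned frame starter theorem as quoted in this very paper, since every element of $\zed_9\setminus\{0,3,6\}$ has odd order (the patterned version is $\{\{1,8\},\{2,7\},\{4,5\}\}$); and when $H=\{0\}$ the starter $\{\{1,2\}\}$ in $\zed_3$ violates $g\geq 4h$ even more simply. So the sharp necessary condition for frame starters is $g\geq 3h$, as you suspected, and your reduction of the index-$3$ case to a complete mapping of $H$ (which exists whenever $|H|$ is odd, e.g.\ via $\sigma=\mathrm{id}$) correctly characterizes when $g=3h$ occurs. The bound $g\geq 4h$, together with the paper's sum argument, is correct if ``frame starter'' is strengthened to ``strong frame starter,'' which is presumably the intended context given the standing hypothesis $g/h\geq 4$ in Conjecture 1.
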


\begin{proof} 
The first statement is obvious, since the pairs in $S$ partition $G \setminus H$.

The second statement is proven as follows. Suppose $d \in G \setminus H$ has order $2$.
There must be a pair $\{x,y\} \in S$ such that $x - y = d$. But then we also have $y - x = d$, so the difference $d$ occurs twice. This is not allowed.

Finally, we prove that $g \geq 4h$. Of course $h \mid g$, so we need to rule out $g=2h$ and $g=3h$. 

If $g = 2h$, then there are two cosets of $H$ in $G$, namely, $H$ and $H + z$ for some $z \not\in H$. If $\{x,y\}$ is a pair in $S$, then $x,y \in H+z$. But then $x-y \in H$, which is not allowed.  

If $g = 3h$, then there are three cosets of $H$ in $G$, namely, $H$, $H + z$ and $H+z'$ for some $z,z' \not\in H$. If $\{x,y\}$ is a pair in $S$, then WLOG $x \in H+z$ and $y \in H+z'$. But then $x+y \in H$, which is not allowed. \qed 
\end{proof}
 
When $H = \{0\}$, a frame starter in $G \setminus H$ is just called a \emph{starter}. A starter can only exist in a group of odd order. Starters have received much study in the past.
In this paper, I mainly consider frame starters where $|H| > 1$.

It is still not known exactly which groups and subgroups admit frame starters. But here is one interesting general existence result originally due to Rosa \cite{Rosa66}. Rosa proved the result in the setting of Skolem sequences; see Rees and Stinson \cite{RS92} for additional discussion. This theorem was later re-discovered by Wang \cite{Wang95}.

\begin{theorem}
There is a frame starter in $\zed_{2n} \setminus \{0,n\}$ for all $n \equiv 0,1 \bmod 4$.
\end{theorem}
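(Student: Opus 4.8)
The plan is to recast the statement as a difference condition and then read it off from Skolem's theorem. Here $g=2n$, $h=2$ and $H=\{0,n\}$, so a frame starter in $\zed_{2n}\setminus\{0,n\}$ is a set of $n-1$ pairs that partition $\zed_{2n}\setminus\{0,n\}$ and whose signed differences cover $\zed_{2n}\setminus\{0,n\}$. The element $n$ is the unique involution of $\zed_{2n}$ and it lies in $H$, so negation acts on the $2n-2$ points of $\zed_{2n}\setminus\{0,n\}$ with no fixed points, grouping them into the $n-1$ inverse-pairs $\{d,2n-d\}$ with $1\le d\le n-1$. Condition (2) is therefore equivalent to asking that the $n-1$ pairs have integer differences realizing each class $\{d,2n-d\}$ exactly once.

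Now recall Skolem's theorem: a Skolem sequence of order $m$ --- a partition of $\{1,\dots,2m\}$ into pairs $\{a_i,a_i+i\}$ for $1\le i\le m$ --- exists if and only if $m\equiv 0,1\bmod 4$, which is exactly the hypothesis on $n$. Take such a sequence of order $n$ and identify $\{1,\dots,2n\}$ with $\zed_{2n}$ via $2n\equiv 0$. The sequence contains a unique pair of difference $n$, say $\{a_n,a_n+n\}$; translating every pair by $-a_n$ (a bijection of $\zed_{2n}$ that leaves all differences unchanged) sends this pair to $\{0,n\}=H$. Discarding it, the remaining $n-1$ translated pairs partition $\zed_{2n}\setminus\{0,n\}$ and carry integer differences $1,\dots,n-1$. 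Since each of these is strictly smaller than $n$, the corresponding classes $\{d,2n-d\}$ are distinct and exhaust $\zed_{2n}\setminus\{0,n\}$, so by the reduction above the surviving pairs form a frame starter of type $2^n$.

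The conceptual core is thus the translation trick, which converts any order-$n$ Skolem sequence into the required frame starter; the remaining verifications (that translation preserves both the partition and the differences, and that the $n-1$ small differences give distinct nonzero inverse-classes) are routine. I expect the only real obstacle to be Skolem's existence theorem itself: its congruence condition $m\equiv 0,1\bmod 4$ is precisely what forces the hypothesis $n\equiv 0,1\bmod 4$, but establishing it requires the explicit (and classical) constructions of Skolem sequences, which I would simply cite rather than reprove; see Rees and Stinson \cite{RS92}. If a self-contained argument were wanted, essentially all of the work would move into writing down those sequences explicitly and checking the two defining properties, and I would organize that as separate cases for $n\equiv 0\bmod 4$ and $n\equiv 1\bmod 4$.
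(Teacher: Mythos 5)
Your proposal is correct and takes essentially the same route as the paper, which states this theorem without proof and attributes it to Rosa's construction in the setting of Skolem sequences (citing Rees--Stinson for discussion): your argument --- take an order-$n$ Skolem sequence, view $\{1,\dots,2n\}$ as $\zed_{2n}$, translate so the difference-$n$ pair lands on $\{0,n\}$, and discard it --- is precisely the standard derivation behind that citation, and all the verifications you sketch (translation preserves differences, the residual differences $1,\dots,n-1$ give distinct inverse classes exhausting $\zed_{2n}\setminus\{0,n\}$) are sound. Invoking Skolem's existence theorem for orders $m\equiv 0,1 \bmod 4$ rather than reproving it is exactly in the spirit of the paper's treatment.
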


A frame starter $S$ in $G \setminus H$ is \emph{patterned} if $S = \{ \{x,-x\}: x \in G \setminus H\}$. The following theorem is  due to Wang.

\begin{theorem}
\cite{Wang98}
$G \setminus H$ has a patterned frame starter if and only if all elements of $G \setminus H$ have odd order.
\end{theorem}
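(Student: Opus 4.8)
The plan is to translate the two defining properties of a frame starter into concrete conditions on the \emph{doubling map} $\mu\colon x \mapsto 2x$, and then analyze that map. For the patterned starter $S = \{\{x,-x\} : x \in G \setminus H\}$, the pair $\{x,-x\}$ is a genuine two-element set exactly when $x$ does not have order $2$, so Property~1 amounts to $G \setminus H$ containing no element of order $2$ (which Lemma~\ref{elem.lem} already forces for any frame starter). The difference produced by the pair $\{x,-x\}$ is $\pm 2x$, so as $x$ runs over $G \setminus H$ the collection $\{\pm(x_i - y_i)\}$ is precisely $\{2x : x \in G \setminus H\}$. Hence Property~2 is equivalent to the single statement that $\mu$ restricts to a \emph{bijection} of $G \setminus H$ onto itself. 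The whole theorem thus reduces to showing that this doubling bijection exists if and only if every element of $G \setminus H$ has odd order.

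For the ``if'' direction I would assume every element of $G \setminus H$ has odd order and verify the two conditions. No element has order $2$, giving Property~1. For Property~2 I would first check that $\mu$ maps $G \setminus H$ into itself: if $x$ has odd order $m$ then $2x$ also has order $m$, and the identity $x = \tfrac{m+1}{2}(2x)$ shows that $2x \in H$ would force $x \in H$, so in fact $2x \in G \setminus H$. Injectivity of $\mu$ on $G \setminus H$ follows because the odd-order elements of $G$ form a subgroup $G_{\mathrm{odd}}$: if $2x = 2y$ with $x,y \in G \setminus H$, then $x-y \in G_{\mathrm{odd}}$ has order dividing $2$, hence $x = y$. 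An injective self-map of the finite set $G \setminus H$ is automatically a bijection, completing this direction.

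The reverse implication is where the real work lies, and I would argue by contradiction. Suppose $\mu$ is a bijection of $G \setminus H$ but some element has even order; choosing an appropriate power of it, I may assume there is $y \in G \setminus H$ of order $2m$ with $m$ odd. Then $u := 2y \in G \setminus H$ has order $m$, and $z := my$ is an element of order $2$. The key point is to compare the two preimages of $u$ under doubling: in the ambient group $G$ these are $y$ and $y + z$, and they are distinct because $z \neq 0$. Since $\mu$ is injective on $G \setminus H$ and already $2y = u$, the element $y$ is the unique preimage lying in $G \setminus H$, which forces $y + z \in H$. Writing $1+m = 2s$, I would compute $y + z = (1+m)y = s\,(2y) = su$; and since $2s \equiv 1 \pmod m$ we have $\gcd(s,m)=1$, so $su$ generates the same cyclic group as $u$. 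Thus $u \in \langle su\rangle \subseteq H$, contradicting $u \in G \setminus H$.

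The main obstacle is precisely this last direction: bijectivity of the doubling map is a condition attached to the complement of $H$, and the difficulty is to leverage it to control where the ``second'' preimage $y+z$ of an even-order element lands relative to the subgroup $H$. The reduction to an element of order exactly $2m$, together with the observation that $(1+m)y$ is a generator-multiple $su$ of $u=2y$, is what converts the bijectivity constraint into the containment $u \in H$ and hence the contradiction.
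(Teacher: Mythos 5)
The paper itself contains no proof of this statement---it is quoted verbatim from Wang \cite{Wang98}---so there is no internal argument to compare against, and your proposal must stand on its own. It does: the proof is correct. Your opening reduction is exactly the right move: each pair $\{x,-x\}$ contributes the two differences $2x$ and $2(-x)$, so as $x$ runs over $G \setminus H$ the difference multiset is precisely $\{2x : x \in G \setminus H\}$, and the two frame-starter properties collapse to ``no involutions in $G \setminus H$, and the doubling map $\mu$ is a bijection of $G \setminus H$ onto itself.'' The forward direction is clean ($x = \frac{m+1}{2}(2x)$ puts $x \in \langle 2x \rangle$, so $2x \in H$ would drag $x$ into $H$; injectivity follows since odd-order elements of an abelian group form a subgroup), and the converse mechanism---the second preimage $y+z = (1+m)y = su$ of $u = 2y$ is forced into $H$ by injectivity, while $2s \equiv 1 \pmod{m}$ gives $\gcd(s,m)=1$, so $\langle su \rangle = \langle u \rangle \subseteq H$, contradicting $u \in G \setminus H$---is correct and rather elegant. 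One point you should make explicit rather than wave at: when you ``choose an appropriate power'' to produce $y \in G \setminus H$ of order $2m$ with $m$ odd, an arbitrary power of an even-order element could perfectly well land in $H$. The power you need is $y = 2^{a-1}x$ when $x \in G \setminus H$ has order $2^a m$, and this stays in $G \setminus H$ precisely because you have already established that $\mu$ maps $G \setminus H$ into itself, so \emph{iterated doubling} never leaves $G \setminus H$; powers of two are the only safe choice, and it is worth saying so. (The degenerate case $m=1$ also takes care of itself: an order-$2^a$ element doubles down to an involution in $G \setminus H$, contradicting Property~1.) With that one sentence added, the argument is complete and self-contained.
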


\begin{coro}
$G \setminus H$ has a patterned frame starter if $G$ has odd order.
\end{coro}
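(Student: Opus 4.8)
The plan is to derive this immediately from the preceding theorem of Wang, which characterizes \emph{exactly} when $G \setminus H$ admits a patterned frame starter: precisely when every element of $G \setminus H$ has odd order. So it suffices to verify that this order condition holds automatically once $G$ has odd order, and then invoke that theorem.

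First I would apply Lagrange's theorem: for any $x \in G$, the order of $x$ divides $|G| = g$. If $g$ is odd, then every divisor of $g$ is odd, so the order of each $x \in G$ is odd. This applies in particular to every element of the subset $G \setminus H$. Having established that all elements of $G \setminus H$ have odd order, I would apply Wang's theorem directly to conclude that $G \setminus H$ has a patterned frame starter.

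There is no genuine obstacle here, since the corollary is a direct specialization of the cited theorem; the only thing to notice is that the local condition on $G \setminus H$ is forced globally by the parity of $g$. For completeness one could instead give a self-contained verification that $\{ \{x,-x\} : x \in G \setminus H \}$ is a frame starter when $g$ is odd: the map $x \mapsto -x$ is a fixed-point-free involution on $G \setminus \{0\} \supseteq G \setminus H$ (fixed-point-free because odd group order rules out elements of order $2$), so the pairs partition $G \setminus H$, while the differences $\pm (x - (-x)) = \pm 2x$ range over all of $G \setminus H$ because doubling is a bijection of a group of odd order. The cleanest route, however, is simply to cite the theorem.
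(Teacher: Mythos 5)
Your proof is correct and follows exactly the route the paper intends: the corollary is stated as an immediate consequence of Wang's theorem, since by Lagrange's theorem every element of an odd-order group has odd order. Your optional self-contained verification of the patterned starter is also sound, but the citation of the theorem is all that is needed.
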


The following non-existence result was proven by Anderson \cite{An-80} in the special case where $G$ is a cyclic group. It was observed in \cite{DS80} without proof that a similar result holds in any abelian group. I include the proof of the more general result here for completeness.

\begin{theorem}
\label{nonexist2.thm}
\cite{An-80,DS80}
Suppose $G$ is an abelian group of order $2u$ and suppose $H$ is a subgroup of $G$ of order $2t$, where $t$ is odd. If  $u/t \equiv 2\text{ or }3 \bmod 4$, then there is no frame starter in $G \setminus H$.
\end{theorem}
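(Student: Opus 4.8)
The plan is to reduce everything to a single parity/divisibility count extracted from one homomorphism $\epsilon : G \to \zed_2$. First I would use a frame starter to pin down the $2$-part of $G$. By part~(2) of Lemma~\ref{elem.lem}, $G \setminus H$ contains no involution, so every involution of $G$ lies in $H$; since $|H| = 2t$ with $t$ odd, $H$ has a unique involution $w$, and hence $G$ has exactly one involution. An abelian group with a unique involution has cyclic Sylow $2$-subgroup, so $G \cong \zed_{2^a} \times G_{\mathrm{odd}}$ with $G_{\mathrm{odd}}$ of odd order. Writing $n = u/t = [G:H]$, the fact that $t$ is odd gives that the $2$-part of $n$ equals the $2$-part of $u = |G|/2$; thus $n \equiv 3 \bmod 4$ forces $a = 1$ (Sylow $2$-subgroup $\zed_2$) while $n \equiv 2 \bmod 4$ forces $a = 2$ (Sylow $2$-subgroup $\zed_4$). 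In either case $G$ admits a unique surjective homomorphism $\epsilon : G \to \zed_2$ (since $\mathrm{Hom}(G,\zed_2) \cong G[2]$ has order $2$), with kernel the unique index-$2$ subgroup.

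Next I would exploit $\epsilon$ to count the pairs of a hypothetical frame starter $S = \{\{x_i,y_i\}\}$ in two different ways. I classify the $(g-h)/2$ pairs by the unordered pair of values $(\epsilon(x_i),\epsilon(y_i))$: let $A$, $B$, $C$ be the numbers of pairs of type $(0,0)$, $(1,1)$, and $(0,1)$ respectively, and set $P_1 = |\{z \in G\setminus H : \epsilon(z)=1\}|$. Property~(1) (the pairs partition $G \setminus H$) counts the $\epsilon = 1$ elements as $2B + C = P_1$. Property~(2) (the multiset $\{\pm(x_i-y_i)\}$ equals $G \setminus H$) is where the leverage appears: since $\epsilon(x_i - y_i) = \epsilon(x_i)+\epsilon(y_i)$ and $\epsilon(-z) = \epsilon(z)$ in $\zed_2$, a difference and its negative both have $\epsilon$-value $1$ exactly for the mixed pairs, so counting the $\epsilon = 1$ differences gives $2C = P_1$. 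Combining $2B + C = P_1 = 2C$ yields $C = 2B$ and therefore $P_1 = 4B$, so $4 \mid P_1$.

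Finally I would compute $P_1 \bmod 4$ and read off the contradiction. As $\epsilon$ is surjective with kernel of index $2$, we have $|\{z \in G : \epsilon(z)=1\}| = u$, while $|\{z \in H : \epsilon(z)=1\}|$ equals $t$ if $\epsilon(w)=1$ and $0$ if $\epsilon(w)=0$ (the odd part $H_{\mathrm{odd}}$ always maps to $0$). A short check shows $\epsilon(w) = 1$ precisely when $a = 1$. Hence $P_1 = u - t = t(n-1)$ when $n \equiv 3 \bmod 4$, and $P_1 = u = nt$ when $n \equiv 2 \bmod 4$. Using that $t$ is odd together with $n-1 \equiv 2 \bmod 4$ (resp.\ $n \equiv 2 \bmod 4$), both cases give $P_1 \equiv 2 \bmod 4$, contradicting $4 \mid P_1$.

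I expect the main obstacle to be the first paragraph: correctly establishing the $2$-structure of $G$ (that the unique-involution consequence of Lemma~\ref{elem.lem} forces a cyclic Sylow $2$-subgroup, and that $a \in \{1,2\}$ corresponds exactly to the two congruence classes) and determining the value $\epsilon(w)$. This is precisely the point where the two cases $n \equiv 2,3 \bmod 4$ behave differently, yet they must be shown to yield the same final contradiction $P_1 \equiv 2 \bmod 4$. Once $\epsilon$ is in hand, the two-way count of the second paragraph is the heart of the idea but is essentially forced.
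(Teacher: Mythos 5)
Your proposal is correct and takes essentially the same approach as the paper: the same parity homomorphism onto $\zed_2$, the same classification of pairs by type, and the same double count of odd elements versus odd differences, with your unified criterion $4 \mid P_1$ and the abstract unique-involution argument for the cyclic Sylow $2$-subgroup merely streamlining the paper's explicit case-by-case decompositions $G \cong \zed_2 \times G'$ and $G \cong \zed_4 \times G'$. Your computed values $P_1 = u-t$ (when $u/t \equiv 3 \bmod 4$) and $P_1 = u$ (when $u/t \equiv 2 \bmod 4$) match the paper's counts exactly.
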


\begin{proof}
Suppose first that $u/t \equiv 3 \bmod 4$. Then $G \cong \zed_2 \times G'$, where $|G'| = u$ is odd, and $H \cong \zed_2 \times  H'$, where $|H'| = t$ is odd and $H'$ is a subgroup of $G'$. 
Define the homomorphism $\phi : G \rightarrow \zed_2 $ by $\phi(i,g') = i$.

Suppose $S = \{ \{x_i,y_i \} : 1 \leq i \leq u-t\}$ is a frame starter in $G \setminus H$. 
An element $g \in G$ is \emph{even} or \emph{odd} according to whether $\phi(g) = 0$ or $1$. 
An unordered pair $\{x_i,y_i \}$ is \emph{even} or \emph{odd} according to whether $\phi(x_i-y_i) = 0$ or $1$. 
Note that $S$ must contain $(u-t)/2$ even pairs and  $(u-t)/2$ odd pairs.

The \emph{type} of a pair $\{x_i,y_i \}$ is defined to be $\{\phi(x_i), \phi(y_i)\}$.
For $T = \{0,0\}, \{0,1\}$ or $\{1,1\}$, let $n_T$ denote the number of pairs in $S$ of type $T$.
Counting even and odd elements in the pairs in $S$, we have 
\[2 n_{\{1,1\}} + n_{\{0,1\}} = 2 n_{\{0,0\}} + n_{\{0,1\}} = u-t.\] 
Further, the odd differences only occur in pairs of type $\{0,1\}$, so 
\[ n_{\{0,1\}} = \frac{u-t}{2}.\]
It then follows that
\[ n_{\{1,1\}} = n_{\{0,0\}} = \frac{u-t}{4}.\]
Now $u = st$, where $s \equiv 3 \bmod 4$, so $u-t = (s-1)t \equiv 2 \bmod 4$ since $t$ is odd. 
Thus $(u-t)/4$ is not an integer, which is a contradiction.

The proof  when $u/t \equiv 2 \bmod 4$ is based on similar techniques. But first we have to consider the structure of $G$. Here we have $G \cong \zed_4 \times G'$ or $G \cong \zed_2 \times \zed_2 \times G'$, where $|G'|$ is odd. Also,
$H \cong \zed_2 \times H'$, where $|H'|$ is odd. 
In order for a frame starter to exist, there cannot be any elements of order $2$ in $G \setminus H$. Thus we must have $G \cong \zed_4 \times G'$ and $H = \{0,2\} \times H'$. 

Define the homomorphism $\phi : G \rightarrow \zed_2 $ by $\phi(i,g') = i \bmod 2$.
We again define an element in $g \in $ to be \emph{even} or \emph{odd} according to whether $\phi(g) = 0$ or $1$, and an unordered pair $\{x_i,y_i \}$ is \emph{even} or \emph{odd} according to whether $\phi(x_i-y_i) = 0$ or $1$. 

Note that all $2t$ elements of $H$ are even, so 
$G \setminus H$ contains $u-2t$ even elements and $u$ odd elements. 
Also, $S$ must contain $(u-2t)/2$ even pairs and $u/2$ odd pairs.

Counting even and odd elements in the pairs in $S$, 
we have \[2 n_{\{0,0\}} + n_{\{0,1\}} = u-2t\]
and
 \[2 n_{\{1,1\}} + n_{\{0,1\}}  = u.\]
The odd differences only occur in pairs of type $\{0,1\}$, so 
\[ n_{\{0,1\}} = \frac{u}{2}.\]
However, it then follows that
\[ n_{\{1,1\}} = \frac{u}{4} \quad \text{and} \quad n_{\{0,0\}} = \frac{u}{4}-t,\]
which is is impossible since these values must be integers.
\qed
\end{proof}

\section{Strong and Orthogonal Frame Starters}
\label{strong.sec}

I now turn  to strong and orthogonal frame starters. Here are the relevant definitions.

\begin{definition}
Suppose that $S_1 = \{ \{x_i,y_i \} : 1 \leq i \leq (g-h)/2\}$ and $S_2 = \{ \{u_i,v_i \} : 1 \leq i \leq (g-h)/2\}$
are both frame starters in $G \setminus H$. Without loss of generality, assume that
$y_i - x_i = v_i - u_i$ for $1 \leq i \leq (g-h)/2$.  $S_1$ and $S_2$ are
\emph{orthogonal} if the following two properties hold:
\begin{enumerate}
\item $y_i - v_i \not\in H$ for $1 \leq i \leq (g-h)/2$.
\item $y_i - v_i \neq y_j - v_j$ if $1 \leq i,j \leq (g-h)/2$, $i \neq j$. 
\end{enumerate}
\end{definition}
In other words, when the pairs in $S_1$ and $S_2$ are matched according to their differences, the ``translates'' are distinct elements of $G \setminus H$. These translates are often called the \emph{adder}.

\begin{definition}
Suppose that $S = \{ \{x_i,y_i \} : 1 \leq i \leq (g-h)/2\}$ is a frame starter in $G \setminus H$. $S$  is \emph{strong}
 if the following two properties hold:
\begin{enumerate}
\item $x_i + y_i \not\in H$ for $1 \leq i \leq (g-h)/2$.
\item $x_i + y_i  \neq x_j + y_j$ if $1 \leq i,j \leq (g-h)/2$, $i \neq j$. 
\end{enumerate}
\end{definition}

It is easy to see that a frame starter $S$ is strong if and only if $S$ is orthogonal to $-S$.

\begin{example}
\label{zed10}
Suppose $G =\zed_{10}$ and $H = \{0,5\}$. Here is a strong frame starter of type $2^5$ in $G\setminus H$:
\[
\begin{array}{l}
S = \{ \{3,4\}, \{7,9\}, \{8,1\}, \{2,6\} \}. 
\end{array}
\]
\end{example}

\begin{example}
Suppose $G =\zed_{7}$ and $H = \{0\}$. Here is a strong frame starter  of type $1^7$ in $G\setminus H$:
\[
\begin{array}{l}
S = \{ \{2,3\}, \{5,1\}, \{6,4\} \}. 
\end{array}
\]
\end{example}

\begin{example}
\cite{DS80}
\label{z15.exam}
Suppose $G =\zed_{15}$ and $H = \{0,5,10\}$. Here are two orthogonal frame starters  of type $3^5$ 
in $G\setminus H$:
\[
\begin{array}{l}
S_1 = \{ \{1,2\}, \{9,11\}, \{3,6\}, \{8,12\}, \{13,4\}, \{7,14\} \} \\
S_2 = \{ \{2,3\}, \{11,13\}, \{9,12\} , \{4,8\}, \{1,7\}, \{14,6\}\}. 
\end{array}
\]
It is easy to compute the adder associated with these starters:
\[
\begin{array}{c|c|c}
S_1 & \multicolumn{1}{|c|}{\text{adder}} & S_2\\ \hline
\{1,2\} & 1 & \{2,3\} \\
\{9,11\} & 2 &  \{11,13\}\\
\{3,6\} & 6 & \{9,12\}\\
\{8,12\} & 11 &  \{4,8\}\\
\{13,4\} & 3 &  \{1,7\}\\
\{7,14\} & 7 &  \{14,6\}
\end{array}
\]
\end{example}

\begin{example}
\cite{St80}
\label{z4z4.exam}
Suppose $G =\zed_{4} \times \zed_{4}$ and $H = \{(0,0), (0,2), (2,0), (2,2)\}$. Here is  a strong frame starter of type $4^4$ in $G\setminus H$:
\[
\begin{array}{ll}
S = &\{ \{(1,1),(3,2)\}, \{(3,0),(3,1)\}, \{(2,1),(3,3)\}, \{(0,3),(1,3)\},\\
&   \{(1,0),(2,3)\}, \{(0,1),(1,2)\} \}. 
\end{array}
\]
\end{example}

When $H = \{0\}$,  orthogonal and strong frame starters in $G \setminus H$ are  called  \emph{orthogonal starters}
and \emph{strong starters}, resp. These have received considerable study, but, as was the case with starters, I mainly consider  orthogonal and strong frame starters with $|H| > 1$ in this paper.

Orthogonal frame starters in $G \setminus H$, of type $h^n$, where $|H| = h$ and $|G| = nh$, can be used to construct a \emph{uniform frame} of type $h^n$. When $h = 1$, the frame is known as a \emph{Room square}.
This frame has $G$ in its automorphism group. For information about uniform frames and Room squares, see 
\cite{DL93,DS92a,DSZ94,DW10,GZ93,MSVW81} . 

Here are two known families of strong frame starters.

\begin{theorem}\cite{DS80,SW80}
If $q \equiv 1 \bmod 4$ is a prime power and $t \geq 1$, then there is a strong frame starter in
$(\eff_q \times (\zed_2)^t) \setminus (\{0\} \times (\zed_2)^t)$.
\end{theorem}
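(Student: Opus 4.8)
The plan is to translate the definition into conditions over $\eff_q$ by projecting onto the quotient $G/H \cong \eff_q$. Write $G = \eff_q \times (\zed_2)^t$ and $H = \{0\} \times (\zed_2)^t$, so that $G \setminus H = \eff_q^* \times (\zed_2)^t$. Any admissible pair has the form $\{(a,\mathbf{b}),(c,\mathbf{d})\}$ with $a \neq c$ (otherwise its difference lies in $H$), and the crucial observation is that, because the second coordinate is in characteristic $2$, both the difference and the sum of such a pair have the same second coordinate, namely $\mathbf{b}+\mathbf{d}$. I therefore attach to each pair the \emph{colour} $\mathbf{b}+\mathbf{d} \in (\zed_2)^t$. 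Under this colouring, the frame-starter conditions become: the pairs partition $\eff_q^* \times (\zed_2)^t$, and for each fixed colour $\mathbf{e}$ the first-coordinate differences $\pm(a-c)$ of the pairs of colour $\mathbf{e}$ run exactly once through $\eff_q^*$ (forcing exactly $(q-1)/2$ pairs per colour). The strong condition becomes: for each fixed colour the first-coordinate sums $a+c$ are distinct and nonzero, sums of pairs of different colours being automatically distinct since their second coordinates differ.

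So the problem reduces to producing, for every colour $\mathbf{e}$, a family of $(q-1)/2$ pairs from $\eff_q^*$ whose signed differences exhaust $\eff_q^*$ and whose sums are distinct and nonzero (a ``strong half-starter''), and then distributing the $(\zed_2)^t$-labels on the two ends of each pair so that, over all colours, every element of $\eff_q^* \times (\zed_2)^t$ is used exactly once. Here I would exploit that $q \equiv 1 \bmod 4$ makes $-1$ a square in $\eff_q$ and the cyclic group $\eff_q^*$ of order divisible by $4$; pairing within and across the cosets of $\{1,-1\}$ and of the squares is then the natural way to meet the signed-difference and distinct-sum requirements. Note that the colour classes cannot all use the same pairing of $\eff_q^*$: for $t=1$ a parity count shows that no single bijection $\pi$ has both $\pi$ and $\mathbf{b}\mapsto \mathbf{b}+\pi(\mathbf{b})$ bijective, so the labels cannot be assigned uniformly and the classes must use genuinely different pairings. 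This is fortunate, since it means no strong starter of $\eff_q$ itself is required, and indeed none exists for $q=5$.

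For large $t$ I would reduce to small cases by a doubling step. Given a strong frame starter of type $(2^t)^q$ with pairs $\{x_i,y_i\}$, I tensor by $W=(\zed_2)^2$, replacing each pair by the four pairs $\{(x_i,\mathbf{w}),(y_i,M\mathbf{w})\}$ for $\mathbf{w}\in W$, where $M$ is the companion matrix of an irreducible quadratic over $\zed_2$. Then $M$ and $M+I$ are both invertible, so $\mathbf{w}\mapsto M\mathbf{w}$ and $\mathbf{w}\mapsto \mathbf{w}+M\mathbf{w}$ are both bijections of $W$; consequently the element-, difference-, and sum-conditions all lift, yielding a strong frame starter of type $(2^{t+2})^q$. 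Such an $M$ exists over $(\zed_2)^2$ but not over $(\zed_2)^1$, where the same parity obstruction intervenes, so the doubling increases $t$ by $2$, and it suffices to construct the two base cases $t=1$ and $t=2$ directly.

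The main obstacle is exactly these base cases, and in particular $t=1$. There the parity obstruction forbids the uniform ``one pairing per colour'' construction, so one must exhibit two explicit and different pairings of $\eff_q^*$, one for each colour, arrange their layer-$0$/layer-$1$ assignments to interlock into a partition of $\eff_q^* \times \zed_2$, and verify simultaneously that each colour class covers all signed differences and has distinct nonzero sums. A bookkeeping count shows that the interlocking requires $(q-1)/4$ same-layer pairs in each layer, and it is precisely this integrality that forces the hypothesis $q \equiv 1 \bmod 4$. Carrying out the verification for a single explicit family that works for all such $q$ — rather than case by case — is the technical heart of the proof.
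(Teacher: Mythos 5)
Your reduction is correct and cleanly set up: colouring each pair $\{(a,\mathbf{b}),(c,\mathbf{d})\}$ by $\mathbf{b}+\mathbf{d}$, noting that sum and difference share the same second coordinate in characteristic $2$, and translating the frame-starter and strongness conditions into per-colour conditions on $\eff_q^*$ plus an interlocking partition requirement is all sound. Your parity count for $t=1$ (each layer must contain $(q-1)/4$ same-layer pairs) is essentially the counting that underlies Theorem \ref{nonexist2.thm}, and it correctly identifies why $q \equiv 1 \bmod 4$ is forced. The doubling step is also valid: it is precisely Theorem \ref{mult1.thm} specialized to $G_2 = (\zed_2)^2$, using the linear strong complete mapping given by a matrix $M$ with $M$ and $M+I$ invertible, and you are right that no such map exists on $\zed_2$ itself.

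The genuine gap is exactly where you locate the ``technical heart'': the base cases $t=1$ and $t=2$ are never constructed. ``Pairing within and across the cosets of $\{1,-1\}$ and of the squares is the natural way'' is a heuristic, not an argument --- no explicit family of pairs is exhibited, and none of the three simultaneous conditions (signed differences exhausting $\eff_q^*$ in each colour class, distinct nonzero sums within each class, and the layer assignments of the classes interlocking into a partition of $\eff_q^* \times (\zed_2)^t$) is verified for any family. These conditions couple the colour classes, so they cannot be met one class at a time; and since the theorem includes $q=5$ and $q=9$, for which no strong starter in $\eff_q$ exists (cf.\ Example \ref{zed10}, which is the case $q=5$, $t=1$), you cannot fall back on a strong starter in the quotient via Theorem \ref{GL.thm} either --- that route fails for $t=1$ in any case, since $\zed_2$ has no strong complete mapping. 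Because your doubling advances $t$ by $2$, you need two independent explicit constructions, uniform in the prime power $q \equiv 1 \bmod 4$, and the entire content of the cited theorem in \cite{DS80,SW80} is precisely such an explicit cyclotomic construction (pairs of the form $\{\alpha^{2i},\alpha^{2i+1}\}$ for a primitive element $\alpha$, placed in suitable layers, with the verification carried out once for all $q$). As written, your proposal reduces the theorem to an equivalent unsolved combinatorial problem rather than proving it.
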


\begin{theorem}\cite{AG78}
If $p \equiv 1 \bmod 6$ is a prime, $p \geq 19$, then there is a strong frame starter in
$(\zed_p \times \zed_3) \setminus (\{0\} \times \zed_3)$.
\end{theorem}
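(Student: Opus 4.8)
The plan is to build a frame starter that is invariant under a suitable automorphism of $G = \zed_p \times \zed_3$, so that the whole object is determined by a small set of base pairs. Since $p \equiv 1 \bmod 6$ we have $3 \mid p-1$, so $\zed_p^*$ contains an element $\omega$ of order $3$. Let $\sigma$ be the automorphism $\sigma(a,c) = (\omega a, c)$ of $G$; it has order $3$, fixes $H = \{0\} \times \zed_3$ setwise, and acts freely on $G \setminus H = \zed_p^* \times \zed_3$. Thus $G \setminus H$ splits into $p-1$ orbits of size $3$, indexed by a coset of $\langle \omega \rangle$ in $\zed_p^*$ together with a level in $\zed_3$. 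Because a strong frame starter here has $(g-h)/2 = 3(p-1)/2$ pairs, which is divisible by $3$, and $\sigma$ admits no fixed pair (a fixed pair would force $\sigma^2 x = x$, impossible for a free order-$3$ map), it is natural to look for a $\sigma$-invariant strong frame starter $S$: such an $S$ is exactly the union of the $\sigma$-orbits of $(p-1)/2$ base pairs.

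The gain is that all three defining conditions become $\sigma$-equivariant. A pair $\{(a,c),(b,c')\}$ has difference with $\zed_3$-component $c-c'$, and $\sigma$ acts on differences by $(\delta,e) \mapsto (\omega \delta, e)$ and on sums by $(s,f) \mapsto (\omega s, f)$. First I would reduce the element-partition and difference-covering conditions to the base data: the base pairs must use each of the $p-1$ orbits exactly once, and, since $-1 \notin \langle \omega \rangle$, a short count shows the differences are covered precisely when there are $(p-1)/6$ \emph{flat} base pairs (with $c=c'$, giving $\zed_3$-difference $0$, each contributing a $\langle\sigma,\pm\rangle$-orbit of size $6$) and $(p-1)/3$ \emph{sloped} base pairs (with $c \neq c'$), for the required total of $(p-1)/2$. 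The divisibilities $6 \mid p-1$ and $3 \mid p-1$ guaranteed by $p \equiv 1 \bmod 6$ are exactly what make these counts integral.

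It then remains to exhibit an explicit family of base pairs, indexed using a primitive root $g$ of $\zed_p$, that simultaneously (i) selects one representative from each orbit, (ii) realises the flat/sloped difference pattern, and (iii) is strong. Conditions (i) and (ii) reduce to checking that certain elements of the form $g^i - g^j$ are nonzero and lie in prescribed cosets of $\langle \omega \rangle$, which is routine. The strong condition (iii) is the crux: because $\sigma$ acts freely on sums too, it suffices that the $(p-1)/2$ base sums have nonzero first coordinate and lie in distinct orbits under $(s,f) \mapsto (\omega s, f)$, i.e. that they represent $(p-1)/2$ distinct (coset, level) labels among the $p-1$ available.

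The main obstacle will be arranging (iii) together with (i) and (ii). Once a concrete parametrisation of the base pairs is fixed, all three conditions collapse to finitely many non-vanishing and distinctness statements about sums and differences of powers of $g$ in $\zed_p$. I expect these to hold for all large $p$ while failing for the small admissible-looking primes $7$ and $13$, so pinning down the exact threshold is where the hypothesis $p \geq 19$ must enter; I would establish it by a direct analysis of the finitely many residue conditions, supplemented if necessary by an explicit check of the smallest cases $p = 19, 31, 37$.
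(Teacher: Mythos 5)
The paper offers no proof of this theorem --- it is quoted from Anderson and Gross \cite{AG78} --- so your proposal can only be judged on its own completeness, and there it has a genuine gap: it stops exactly where the theorem begins. Your equivariant reduction is sound as far as it goes. Since $6 \mid p-1$, the automorphism $\sigma(a,c) = (\omega a, c)$ acts freely on $G \setminus H$, a $\sigma$-invariant starter is determined by $(p-1)/2$ base pairs, and your counts check out: $(p-1)/6$ flat base pairs whose differences must lie in distinct cosets of the order-$6$ subgroup $\langle -1, \omega\rangle$, plus $(p-1)/3$ sloped base pairs whose differences must lie in distinct cosets of $\langle \omega \rangle$, with base sums occupying $(p-1)/2$ distinct (coset, level) classes. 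But no family of base pairs is ever exhibited, and none of your conditions (i)--(iii) is verified for any parametrisation; ``I would establish it by a direct analysis'' defers the entire content of the theorem. Moreover, your claim that everything ``collapses to finitely many non-vanishing and distinctness statements'' is false as stated: the number of base pairs is $(p-1)/2$, which grows with $p$, so condition (iii) comprises $\Theta(p^2)$ distinctness requirements, not a finite check. The actual Anderson--Gross ``partial starter construction'' handles this structurally, by parametrising pairs cyclotomically so that the sums land in prescribed distinct cosets by design rather than by case analysis; that idea --- the crux --- is absent from your sketch.

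A second, smaller error: your expectation that the conditions ``fail for the small admissible-looking primes $7$ and $13$'' misreads the hypothesis $p \geq 19$. Existence is not obstructed at $p = 7$: Table \ref{table1} of this paper records a strong frame starter with $g = 21$, $h = 3$, i.e., in $\zed_{21} \setminus \{0,7,14\} \cong (\zed_7 \times \zed_3) \setminus (\{0\} \times \zed_3)$, found by hill-climbing. So the bound $p \geq 19$ marks a limitation of the specific cyclotomic construction in \cite{AG78}, not an existence threshold, and any completion of your argument should expect its residue conditions to fail at $p = 7, 13$ only for your particular parametrisation --- or possibly not at all, in which case the threshold analysis you anticipate would look quite different.
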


There are various recursive constructions for strong frame starters using strong complete mappings (also known as ``strong orthomophisms''), which I define now. 
Let $G$ be an (additive) abelian group. A mapping $\phi : G \rightarrow G$ is a \emph{strong complete mapping} if the following conditions are satisfied:
\begin{enumerate}
\item $\phi$ is a bijection,
\item the mapping $x \mapsto \phi(x) - x$ is a bijection,
\item the mapping $x \mapsto \phi(x) + x$ is a bijection.
\end{enumerate}

Finite abelian groups that admit strong complete mappings have been completely characterized by Evans \cite{Ev12,Ev13}.
Evans proved that a finite abelian group admits a strong complete mapping if and only if neither its Sylow $2$-subgroup nor its Sylow $3$-subgroup is nontrivial and cyclic.
 
Our next construction is a simple multiplication construction. This construction was first stated in this form in \cite{St81};  however, similar constructions were given earlier in \cite{AG77,Gr74}.

\begin{theorem}
\cite{St81}
\label{mult1.thm}
Suppose $G_1$, $G_2$ are abelian groups and $H$ is a subgroup of $G_1$. Suppose there is a strong frame starter in $G_1 \setminus H$ and suppose  there is a strong complete mapping in
$G_2$. Then there is a strong frame starter in $(G_1 \times G_2) \setminus (H \times G_2)$.
\end{theorem}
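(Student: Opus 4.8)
The plan is to build the desired starter explicitly by letting the strong complete mapping act on the second coordinate. Write the given strong frame starter in $G_1 \setminus H$ as $S = \{\{x_i,y_i\} : 1 \le i \le m\}$, where $m = (|G_1|-|H|)/2$, and let $\phi$ be the given strong complete mapping on $G_2$. I would define, for each index $i$ and each $a \in G_2$, the pair
\[
P_{i,a} = \{\,(x_i,a),\ (y_i,\phi(a))\,\},
\]
and propose $S^* = \{P_{i,a} : 1 \le i \le m,\ a \in G_2\}$ as a frame starter in $(G_1 \times G_2) \setminus (H \times G_2)$. A count gives $|S^*| = m\,|G_2| = (|G_1|\,|G_2| - |H|\,|G_2|)/2$, the correct number of pairs, and each $P_{i,a}$ is a genuine $2$-set since $x_i \ne y_i$. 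Throughout I would use that the $2m$ elements $x_1,\dots,x_m,y_1,\dots,y_m$ are all distinct (they exhaust $G_1\setminus H$), so in particular $x_i \ne y_j$ for all $i,j$.

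Next I would verify the two frame-starter conditions. For the element partition, fix $(u,c)$ with $u \in G_1 \setminus H$; there is a unique $i$ with $u \in \{x_i,y_i\}$. If $u = x_i$, then $(u,c)$ can only be the first coordinate of a $P_{i,a}$ (since $x_i \ne y_j$), forcing $a = c$; if $u = y_i$, it can only be the second coordinate of a $P_{i,a}$, forcing $\phi(a) = c$, which has a unique solution $a$ by bijectivity of $\phi$ (condition 1). So each element is covered exactly once. For the difference partition, the two signed differences of $P_{i,a}$ are $\pm(x_i - y_i,\ a - \phi(a))$. Because $\{\pm(x_i-y_i)\}$ exhausts $G_1 \setminus H$ with each value appearing for exactly one $(i,\text{sign})$, I would reduce to: given $e \in G_2$, solve $a - \phi(a) = e$ (or $\phi(a) - a = e$ for the opposite sign). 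Condition 2, that $x \mapsto \phi(x) - x$ is a bijection, yields a unique $a$ in either case (the map $x \mapsto x - \phi(x)$ is its composition with $z \mapsto -z$, hence also a bijection). Thus each $(d,e)$ with $d \in G_1\setminus H$ is realized exactly once, and all differences automatically avoid $H \times G_2$ since $x_i - y_i \notin H$.

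Finally I would check strongness. The sum from $P_{i,a}$ is $(x_i + y_i,\ a + \phi(a))$, whose first coordinate lies outside $H$ because $S$ is strong, so the sum lies outside $H \times G_2$. For distinctness, suppose $(x_i+y_i, a+\phi(a)) = (x_j+y_j, a'+\phi(a'))$; then $x_i + y_i = x_j + y_j$ forces $i = j$ by strongness of $S$, whereupon $a + \phi(a) = a' + \phi(a')$ forces $a = a'$ by condition 3 ($x \mapsto \phi(x)+x$ a bijection). Hence the sums are distinct and $S^*$ is strong.

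The argument is essentially bookkeeping, and I do not anticipate a serious obstacle. The one point needing care is the matching of each requirement to the correct bijection: condition 1 covers the elements, condition 2 covers the differences, and condition 3 (together with the strongness of $S$, which is what collapses the index $i$ in both the difference and sum arguments) gives distinct strong sums. Getting this correspondence right, and confirming that both signs in the difference condition are handled by the same bijectivity hypothesis, is the crux.
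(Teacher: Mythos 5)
Your construction is exactly the paper's: the paper orders each pair $\{x,y\}$ of $S$ arbitrarily and replaces it by the pairs $\{(x,z),(y,\phi(z))\}$ for all $z \in G_2$ (the paper writes $((x,z),(x,\phi(z)))$, an evident typo for $((x,z),(y,\phi(z)))$), which is precisely your $P_{i,a}$. Your verification of the element-partition, difference-partition, and strongness conditions --- including the careful handling of both signs via the bijectivity of $x \mapsto \phi(x)-x$ --- is correct and simply spells out details the paper's one-line ``the result is a strong frame starter'' leaves to the reader.
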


\begin{proof}
Let $S$ be a strong frame starter in $G_1 \setminus H$. $S$ consists of $(g-h)/2$ pairs, where $|G_1| = g$ and $|H| = h$. Let $\phi$ be a strong complete mapping of $G_2$. For every pair $\{x,y\} \in S$, order the pair arbitrarily, obtaining $(x,y)$. Then replace $(x,y)$ by $|G_2|$ ordered pairs, namely $((x,z),(x,\phi(z))$, for all $z \in G_2$. Finally replace all these ordered pairs by unordered pairs. The result is a strong frame starter in $(G_1 \times G_2) \setminus (H \times G_2)$.\qed
\end{proof}

It is easy to see that, if $q \geq 4$ is a prime power, then the mapping $x \mapsto \alpha x$ is a strong complete mapping in $\eff_q$ provided that $\alpha \neq 0, 1$ or $-1$. So the following corollary of 
Theorem \ref{mult1.thm} is obtained.

\begin{coro}
\label{mult1.cor}
Suppose there is a strong frame starter in $G\setminus H$ and suppose $q \geq 4$ is a prime power. Then there is a strong frame starter in $(G \times \eff_q) \setminus (H \times \eff_q)$.
\end{coro}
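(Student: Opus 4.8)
The plan is to obtain this corollary as an immediate application of Theorem \ref{mult1.thm}, taking $G_1 = G$ and $G_2 = \eff_q$. That theorem requires two inputs: a strong frame starter in $G_1 \setminus H = G \setminus H$, which is given by hypothesis, and a strong complete mapping of $G_2 = \eff_q$. The strong frame starter is already assumed, so the entire task reduces to exhibiting a strong complete mapping of $\eff_q$; once that is in hand, the conclusion follows by quoting Theorem \ref{mult1.thm} verbatim.

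For the strong complete mapping I would use the scalar multiplication $\phi(x) = \alpha x$ for a suitably chosen $\alpha \in \eff_q$, and verify the three defining conditions by routine field arithmetic. Multiplication by $\alpha$ is a bijection of $\eff_q$ precisely when $\alpha \neq 0$; the map $x \mapsto \phi(x) - x = (\alpha - 1)x$ is a bijection precisely when $\alpha \neq 1$; and the map $x \mapsto \phi(x) + x = (\alpha + 1)x$ is a bijection precisely when $\alpha \neq -1$. Hence any $\alpha \in \eff_q \setminus \{0, 1, -1\}$ yields a strong complete mapping, and for such an $\alpha$ the composite construction of Theorem \ref{mult1.thm} produces a strong frame starter in $(G \times \eff_q) \setminus (H \times \eff_q)$, as desired.

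The only place where the hypothesis $q \geq 4$ is needed is in guaranteeing that an admissible $\alpha$ exists, and I expect this trivial existence check to be the sole (minor) obstacle. The forbidden set $\{0, 1, -1\}$ has at most three elements, and in fact exactly two when $\eff_q$ has characteristic two, since there $1 = -1$; so as soon as $q \geq 4$ at least one value of $\alpha$ remains available. This also shows the bound is sharp: in $\eff_2$ and $\eff_3$ every nonzero element lies in $\{1, -1\}$, so no such scalar multiplication is a strong complete mapping, and the argument breaks down exactly as expected.
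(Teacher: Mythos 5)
Your proposal is correct and is essentially the paper's own argument: the paper likewise observes that $x \mapsto \alpha x$ is a strong complete mapping of $\eff_q$ for any $\alpha \neq 0, 1, -1$ (which exists precisely because $q \geq 4$) and then invokes Theorem \ref{mult1.thm}. Your additional remarks on the characteristic-two case and the sharpness of the bound $q \geq 4$ are accurate but not needed beyond what the paper states.
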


\begin{coro}
\label{5h.coro}
If $q \geq 4$ is a prime power, then there is a strong frame starter in $(\zed_{10} \times \eff_q) \setminus (\{0,5\} \times \eff_q)$.
\end{coro}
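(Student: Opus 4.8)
The plan is to recognize this statement as an immediate specialization of the multiplication corollary already established, namely Corollary \ref{mult1.cor}. That corollary takes as input a strong frame starter in an arbitrary $G \setminus H$ together with a prime power $q \geq 4$, and outputs a strong frame starter in $(G \times \eff_q) \setminus (H \times \eff_q)$. The target group $(\zed_{10} \times \eff_q) \setminus (\{0,5\} \times \eff_q)$ is already in precisely this product form, so the whole problem reduces to supplying the ``base'' ingredient: a strong frame starter in $\zed_{10} \setminus \{0,5\}$.

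First I would set $G = \zed_{10}$ and $H = \{0,5\}$, and observe that $H$ is indeed a subgroup of $G$ of order $2$. The required base strong frame starter is then exactly the object exhibited in Example \ref{zed10}, namely the strong frame starter of type $2^5$ given by $S = \{\{3,4\}, \{7,9\}, \{8,1\}, \{2,6\}\}$. Thus the hypotheses of Corollary \ref{mult1.cor} are met with this choice of $G$ and $H$ and the given prime power $q \geq 4$.

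Applying Corollary \ref{mult1.cor} then yields directly a strong frame starter in $(\zed_{10} \times \eff_q) \setminus (\{0,5\} \times \eff_q)$, which is the desired conclusion. There is essentially no obstacle here: the only points requiring (routine) verification are that Example \ref{zed10} really is a valid strong frame starter and that the subgroup $\{0,5\} \times \eff_q$ matches the form $H \times \eff_q$ appearing in the corollary, both of which are immediate. The corollary is therefore just the combination of one worked example with the already-proven multiplication construction.
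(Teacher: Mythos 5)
Your proposal is correct and matches the paper's own proof exactly: the paper likewise cites the strong frame starter in $\zed_{10} \setminus \{0,5\}$ from Example \ref{zed10} and applies Corollary \ref{mult1.cor} with $G = \zed_{10}$ and $H = \{0,5\}$. No gaps or differences to note.
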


\begin{proof}
There is a strong frame starter in $\zed_{10} \setminus \{0,5\}$ (see Example \ref{zed10}). Apply   Corollary \ref{mult1.cor} with $G = \zed_{10}$ and $H = \{0,5\}$.
\end{proof}

\begin{remark}\label{5h.rem} 
Observe that the constructed strong frame starter in Corollary \ref{5h.coro} has type $h^5$, where $h = 2q$. Also, 
$\zed_{10} \times \eff_q \cong \zed_{10q}$ if $q > 5$ is an odd prime, so we obtain a strong frame starter
in $\zed_{10q} \setminus \{0,5,10, \dots , 10q-5\}$ for such a value of $q$. For example, if $q=7$, the result is a strong frame starter (of type $14^5$) in $\zed_{70} \backslash \{0,5,10, \dots , 65\}$.
\end{remark}

The following variation of Theorem \ref{mult1.thm} is a straightforward generalization of \cite[Proposition 7]{Wang98}.

\begin{theorem}
\label{mult1-var.thm}
Suppose $G_1$, $G_2$ are abelian groups and $H$ is a subgroup of $G_1$. Suppose there is a strong frame starter in $G_1 \setminus H$ and a strong frame starter in $(H \times G_2) \setminus (H \times \{0\})$. Further, suppose  there is a strong complete mapping in
$G_2$. Then there is a strong frame starter in $(G_1 \times G_2) \setminus (H \times \{0\})$.
\end{theorem}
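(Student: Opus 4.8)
The plan is to combine two ingredients: the multiplication construction of Theorem~\ref{mult1.thm} applied to the strong frame starter in $G_1 \setminus H$, together with the \emph{given} strong frame starter in $(H \times G_2) \setminus (H \times \{0\})$, and then to show that the union of the two resulting families of pairs is a strong frame starter in $(G_1 \times G_2) \setminus (H \times \{0\})$. Let $S$ be the strong frame starter in $G_1 \setminus H$, let $\phi$ be the strong complete mapping of $G_2$, and let $T$ be the strong frame starter in $(H \times G_2) \setminus (H \times \{0\})$. First I would form the family $S'$ exactly as in the proof of Theorem~\ref{mult1.thm}: order each pair $\{x,y\} \in S$ as $(x,y)$ and replace it by the pairs $\{(x,z),(y,\phi(z))\}$ for all $z \in G_2$. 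Then I would take the combined family $S' \cup T$ and verify the two frame-starter axioms and the two strong axioms for it.

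For property~1, the pairs of $S'$ partition $(G_1 \setminus H) \times G_2$: for a fixed $\{x,y\} \in S$, as $z$ ranges over $G_2$ the pairs $\{(x,z),(y,\phi(z))\}$ partition $\{x,y\} \times G_2$ (using that $\phi$ is a bijection), and the sets $\{x,y\}$ themselves partition $G_1 \setminus H$. The pairs of $T$ partition $(H \times G_2) \setminus (H \times \{0\})$. These two regions are disjoint and their union is precisely $(G_1 \times G_2) \setminus (H \times \{0\})$. For property~2, the signed differences arising from $S'$ are $\pm(x-y,\,z-\phi(z))$; since $z \mapsto z - \phi(z)$ is a bijection of $G_2$ and $\{\pm(x-y)\} = G_1 \setminus H$, these cover $(G_1 \setminus H) \times G_2$ exactly once, while the differences from $T$ cover $(H \times G_2)\setminus(H \times\{0\})$ exactly once. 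Again the two regions are disjoint, so the combined differences fill $(G_1 \times G_2) \setminus (H \times \{0\})$.

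The strong condition is where the structure of the argument really matters. A pair of $S'$ has sum $(x+y,\,z+\phi(z))$, whose first coordinate $x+y$ lies in $G_1 \setminus H$ because $S$ is strong; a pair of $T$ has sum lying in $H \times G_2$, with first coordinate in $H$. Hence the sums coming from $S'$ and those coming from $T$ automatically have first coordinates in the disjoint sets $G_1\setminus H$ and $H$, so no sum from $S'$ can equal a sum from $T$, and none lies in $H \times \{0\}$ (for $S'$ the first coordinate is already outside $H$; for $T$ this is inherited from $T$ being strong). Within $S'$, distinctness of sums follows because the first coordinates $x+y$ are distinct across pairs of $S$ (strongness of $S$) and, for a fixed such pair, $z \mapsto z+\phi(z)$ is a bijection (the third property of a strong complete mapping). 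Within $T$, distinctness is inherited from $T$.

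The one point that genuinely requires the hypotheses to line up — and the only place I expect any real work — is verifying that the two families of sums do not collide; everything else is routine bookkeeping. The clean resolution is the observation that the ``hole-filling'' pairs from $T$ contribute only sums with first coordinate in $H$, whereas the multiplication pairs from $S'$ contribute only sums with first coordinate outside $H$, so disjointness of the two sum sets is free once each piece is known to be strong. I would then conclude that $S' \cup T$ satisfies both frame-starter properties and both strong properties, which completes the proof.
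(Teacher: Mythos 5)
Your proposal is correct and takes exactly the paper's route: apply Theorem~\ref{mult1.thm} to get a strong frame starter in $(G_1 \times G_2) \setminus (H \times G_2)$ and adjoin the given strong frame starter in $(H \times G_2) \setminus (H \times \{0\})$. The paper states this in two lines and leaves the verification implicit; your expanded checks (the partitions of elements and of differences, and the disjointness of the two sum sets via first coordinates in $G_1 \setminus H$ versus $H$) are all accurate fillings-in of that same argument.
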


\begin{proof}
First use Theorem \ref{mult1.thm} to construct a strong frame starter in 
$(G_1 \times G_2) \setminus (H \times G_2)$. Then adjoin the pairs in a strong frame starter in $(H \times G_2) \setminus (H \times \{0\})$. The desired strong frame starter results.\qed
\end{proof} 

Wang uses the Theorem \ref{mult1-var.thm} to prove the following result.

\begin{theorem}\cite{Wang98}
Suppose $n$ is a positive integer, all of whose prime factors are congruent to $1$ modulo $4$. Then
there is a strong frame starter in $\zed_{2n} \setminus \{0,n\}$.
\end{theorem}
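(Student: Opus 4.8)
The plan is to argue by strong induction on $n$, using Theorem~\ref{mult1-var.thm} as the engine and treating prime powers as the base cases. The one structural fact I rely on repeatedly is this: whenever $n = p^a m$ with $p$ prime and $\gcd(p^a,m)=1$, the Chinese Remainder isomorphism $\zed_{2n} \cong \zed_{2m} \times \zed_{p^a}$ sends the unique involution $n$ to $(m,0)$, since $n \equiv m \bmod 2m$ (because $p^a$ is odd) and $n \equiv 0 \bmod p^a$. Hence this isomorphism carries $\{0,n\}$ onto $\{0,m\} \times \{0\}$, which is exactly the shape of deleted subgroup that Theorem~\ref{mult1-var.thm} produces. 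This is what lets me peel off one prime at a time while keeping the deleted subgroup of order $2$.

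For the reduction step, assume $n$ is not a prime power and write $n = p^a m$ with $p \equiv 1 \bmod 4$ prime, $\gcd(p^a,m)=1$, and $1 < m < n$; note that every prime factor of $m$ and of $p^a$ divides $n$ and so is $\equiv 1 \bmod 4$. I would apply Theorem~\ref{mult1-var.thm} with $G_1 = \zed_{2m}$, $H = \{0,m\}$ (the subgroup of order $2$), and $G_2 = \zed_{p^a}$. Its three hypotheses hold: a strong frame starter in $G_1 \setminus H = \zed_{2m}\setminus\{0,m\}$ exists by the induction hypothesis; a strong frame starter in $(H \times G_2)\setminus(H\times\{0\}) \cong \zed_{2p^a}\setminus\{0,p^a\}$ exists by the induction hypothesis (the prime-power case, $p^a < n$); and $G_2 = \zed_{p^a}$ admits a strong complete mapping, for instance $x \mapsto 2x$, whose difference map $x \mapsto x$ and sum map $x \mapsto 3x$ are bijections since $\gcd(6,p^a)=1$ (here $p \geq 5$). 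The construction then outputs a strong frame starter in $(\zed_{2m}\times\zed_{p^a})\setminus(\{0,m\}\times\{0\})$, which under the isomorphism above is precisely a strong frame starter in $\zed_{2n}\setminus\{0,n\}$.

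It remains to settle the base cases $n = p^a$. For $a=1$ this is immediate from the family of \cite{DS80,SW80}: taking $q=p$ and $t=1$ gives a strong frame starter in $(\eff_p \times \zed_2)\setminus(\{0\}\times\zed_2)$, and since $\gcd(p,2)=1$ this group is $\zed_{2p}$ with $\{0\}\times\zed_2$ in the role of $\{0,p\}$. The genuinely cyclic prime-power case $a \geq 2$ is where I expect the real work to lie, and it is the main obstacle. Crucially, it is \emph{not} accessible through the multiplication construction: for $\zed_{2p^a}$ to split nontrivially as $G_1 \times G_2$ the only coprime factorization is $\{2,p^a\}$, and both ways of feeding this into Theorem~\ref{mult1-var.thm} fail — one forces an order-$2$ subgroup inside the odd group $\zed_{p^a}$, and the other requires a strong frame starter in $\zed_{2p^a}\setminus\{0,p^a\}$ as an input, which is circular. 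A self-contained construction is therefore needed for $\zed_{2p^a}\setminus\{0,p^a\}$, presumably by exploiting the multiplicative structure of $\zed_{p^a}$ (or by lifting a strong starter of $\zed_{p^a}$ to the double cover $\zed_2 \times \zed_{p^a}$ with a suitable choice of the $\zed_2$-coordinate). Once these prime-power base cases are established, the induction above delivers the full statement.
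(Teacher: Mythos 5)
Your inductive skeleton is correct as far as it goes, and it matches the route the paper attributes to Wang: the CRT bookkeeping is right (since $p^a$ is odd, $n \equiv m \bmod 2m$ and $n \equiv 0 \bmod p^a$, so $\{0,n\}$ corresponds to $\{0,m\}\times\{0\}$), the three hypotheses of Theorem~\ref{mult1-var.thm} are verified correctly, and $x \mapsto 2x$ is indeed a strong complete mapping of $\zed_{p^a}$ because $\gcd(6,p^a)=1$ when $p \equiv 1 \bmod 4$. The base case $a=1$ via the family of \cite{DS80,SW80} is also fine. But there is a genuine gap, which you yourself flag and then leave open: the cyclic prime-power cases $\zed_{2p^a} \setminus \{0,p^a\}$ with $a \geq 2$ are never constructed, and they are unavoidable --- already $n=25$ requires a strong frame starter in $\zed_{50} \setminus \{0,25\}$ that your argument never produces. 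Your diagnosis of why the multiplication theorems cannot reach these cases is accurate (the only nontrivial coprime splitting of $\zed_{2p^a}$ is $\zed_2 \times \zed_{p^a}$, making Theorem~\ref{mult1-var.thm} circular, while the order-$2$ subgroup cannot sit inside the odd factor), and the obstruction extends to the paper's other tools: Theorem~\ref{GL.thm} with $H = \{0,p^a\}$ would require a strong complete mapping of $\zed_2$, which does not exist by Evans's characterization (the Sylow $2$-subgroup is nontrivial and cyclic), and the cited family of \cite{DS80,SW80} with $q = p^a$ lives in $\eff_{p^a} \times \zed_2$, whose additive group is $(\zed_p)^a \times \zed_2$ --- non-cyclic for $a \geq 2$ --- so it cannot serve as a base case either.

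As written, then, your proposal proves the theorem only for squarefree $n$. The missing ingredient is precisely what must be supplied from \cite{Wang98}: a construction of a strong frame starter in $\zed_{2p^a} \setminus \{0,p^a\}$ for prime powers with $p \equiv 1 \bmod 4$ and $a \geq 2$, presumably exploiting the cyclic unit group of $\zed_{p^a}$ in analogy with the finite-field construction, as you suggest but do not carry out. Note that the paper itself gives no proof of this statement --- it cites Wang and remarks only that his proof uses Theorem~\ref{mult1-var.thm} --- so your reduction is consistent with the intended argument, but until the prime-power base case is established the claimed statement is not proved in full.
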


In 1974, Gross \cite{Gr74} proved the following multiplication theorem. 

\begin{theorem}\cite{Gr74}
\label{gr.thm}
If there exist strong starters in $G_1$ and $G_2$, where $|G_1|$ is odd and $\gcd(|G_2|,6) = 1$, then there is a strong starter in $G_1 \times G_2$.
\end{theorem}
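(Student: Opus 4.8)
The plan is to recognize this as the special case $H=\{0\}$ of the ``fill-in'' multiplication theorem already in hand. A \emph{strong starter} in a group $G$ is by definition a strong frame starter in $G\setminus\{0\}$, so a strong starter in $G_1\times G_2$ is exactly a strong frame starter in $(G_1\times G_2)\setminus\{(0,0)\}$. Setting $H=\{0\}$ in Theorem~\ref{mult1-var.thm}, the hypothesis ``strong frame starter in $G_1\setminus H$'' becomes ``strong starter in $G_1$'', and ``strong frame starter in $(H\times G_2)\setminus(H\times\{0\})$'' becomes ``strong starter in $\{0\}\times G_2\cong G_2$''; both are given. Its conclusion is a strong frame starter in $(G_1\times G_2)\setminus\{(0,0)\}$, i.e.\ precisely a strong starter in $G_1\times G_2$. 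So everything reduces to supplying the one remaining input of Theorem~\ref{mult1-var.thm}: a strong complete mapping of $G_2$.

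First I would produce such a mapping explicitly from the arithmetic hypothesis. Since $\gcd(|G_2|,6)=1$, the group $G_2$ has no element of order $2$ or $3$, so the doubling map $\phi(x)=2x$ and the tripling map $x\mapsto 3x$ are both bijections of $G_2$. I claim $\phi$ is a strong complete mapping: it is a bijection; the map $x\mapsto\phi(x)-x=x$ is (trivially) a bijection; and $x\mapsto\phi(x)+x=3x$ is a bijection. (Equivalently, one may invoke Evans' characterization quoted above, since the trivial Sylow $2$- and $3$-subgroups of $G_2$ are certainly not nontrivial cyclic.) Feeding $\phi$, together with the given $S_1$ and $S_2$, into Theorem~\ref{mult1-var.thm} then yields the desired strong starter, and this is where the coprimality to $6$ is used.

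I expect the genuine difficulty — concealed inside the invocation of Theorem~\ref{mult1-var.thm} — to be the \emph{strong} (sum-distinctness) condition on the ``mixed'' pairs, those with both coordinates nonzero. Building the product directly, the axis pairs $\{(a_i,0),(b_i,0)\}$ from $S_1$ and $\{(0,c_j),(0,d_j)\}$ from $S_2$ cause no trouble, and each mixed pair must have the form $\{(a_i,y),(b_i,\phi(y))\}$ so that its difference $(a_i-b_i,\,y-\phi(y))$ avoids both axes. The starter (difference-covering) condition only requires $y\mapsto y-\phi(y)$ to permute $G_2\setminus\{0\}$, which even the choice $\phi(y)=-y$ achieves. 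The real obstacle is strongness: the sums are $(a_i+b_i,\,y+\phi(y))$, and for $\phi(y)=-y$ these all collapse to $(a_i+b_i,0)$, colliding both with one another and with the axis pairs coming from $S_1$. Forcing $y\mapsto y+\phi(y)$ to be a bijection of $G_2\setminus\{0\}$ is what restores distinctness, and for $\phi(y)=2y$ this map is simply $y\mapsto 3y$ — the precise point at which coprimality to $3$ (and, through $y\mapsto 2y$, to $2$) is indispensable.
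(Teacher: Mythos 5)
Your proposal is correct and matches the paper's own derivation exactly: the paper likewise obtains Theorem~\ref{gr.thm} by setting $H=\{0\}$ in Theorem~\ref{mult1-var.thm} and noting that $\gcd(|G_2|,6)=1$ guarantees a strong complete mapping of $G_2$ (the paper cites Evans' characterization where you also give the explicit map $\phi(x)=2x$, a harmless and correct refinement). Your closing remarks about where strongness and coprimality to $6$ enter are accurate commentary on the machinery inside Theorems~\ref{mult1.thm} and~\ref{mult1-var.thm}, not a deviation in method.
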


Theorem \ref{gr.thm} can be obtained as a corollary of Theorem \ref{mult1-var.thm}, as follows.
Suppose we take $H = \{0\}$ in Theorem \ref{mult1-var.thm} and we also assume that $\gcd(|G_2|, 6) = 1$. Then there is a strong complete mapping in $G_2$. Applying Theorem \ref{mult1-var.thm}, we obtain Theorem \ref{gr.thm}.

I should also mention the following recursive construction due to Horton \cite{Ho71}. As far as I am aware, an analog of this result for frame starters has not been proven.

\begin{theorem}\cite{Ho71}
If there is a strong starter in $G$, where $\gcd(|G|,6) = 1$, then there is a strong starter in 
$\zed_5 \times G$.
\end{theorem}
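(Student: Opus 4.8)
My plan is to obtain the strong starter in $\zed_5 \times G$ by first producing a strong frame starter whose only hole is a copy of $\zed_5$, and then filling that hole. Write $g=|G|$ (odd, since $\gcd(g,6)=1$) and let $S=\{\{x_i,y_i\}\}$ be the given strong starter in $G$, i.e.\ a strong frame starter in $G\setminus\{0\}$. As observed before Corollary \ref{mult1.cor}, $x\mapsto 2x$ is a strong complete mapping of $\eff_5=\zed_5$ (since $2\neq 0,1,-1$). Applying Theorem \ref{mult1.thm} with $G_1=G$, $H=\{0\}$ and $G_2=\zed_5$ therefore yields a strong frame starter $F$ of type $5^g$ in $(G\times\zed_5)\setminus(\{0\}\times\zed_5)$, whose pairs are $\{(x_i,z),(y_i,2z)\}$ for $z\in\zed_5$. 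Since $G\times\zed_5\cong\zed_5\times G$, it remains to adjoin pairs covering the four non-identity elements of the hole $\{0\}\times\zed_5$ so that the result is a strong starter.

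The catch is that the hole cannot be filled in isolation: the only way to pair $\{1,2,3,4\}\subseteq\zed_5$ so that the differences are $\pm1,\pm2$ is $\{2,3\},\{1,4\}$, and both of these pairs sum to $0$ — which is exactly why $\zed_5$ has no strong starter. So I would instead free up some room in the body of $F$. The key point is that a hole element $(0,\alpha)$ can only be paired legally with a body element whose difference has already been \emph{withdrawn} from $F$, and a short calculation shows this forces the withdrawn coset to coincide with the hole's ``partner'' coset — which happens precisely when $S$ contains a pair of the special form $\{p,2p\}$. I would therefore locate such a pair in $S$, delete the five $F$-pairs $\{(2p,z),(p,2z)\}$ that it generates (thereby emptying the two cosets $\{2p\}\times\zed_5$ and $\{p\}\times\zed_5$ and releasing the ten differences $\zed_5\times\{p,-p\}$ and the five sums $\{3p\}\times\zed_5$), and rebuild this region together with the hole.

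Concretely, I would repartition the fourteen elements $(\{2p\}\cup\{p\})\times\zed_5$ together with $\{0\}\times\{1,2,3,4\}$ into seven pairs realizing the differences $\zed_5\times\{p,-p\}$ together with $\{0\}\times\{1,2,3,4\}$: three pairs joining the two cosets, two pairs joining the $p$-coset to the hole, one same-coset pair inside the $2p$-coset, and one pair inside the hole. A routine count shows these numbers are forced and consistent, and the four families of sums then sit in the distinct second-coordinate slices $3p,\,p,\,4p,\,0$. I expect the delicate step to be strongness: one must keep all seven new sums distinct and non-zero and disjoint from the surviving $F$-sums. The slice $3p$ was just vacated, and the slice $0$ carries no $F$-sum, but the new sums in slices $p$ and $4p$ survive only if $p$ and $4p$ are not sums of any \emph{other} pair of $S$. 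Establishing that a suitable pair $\{p,2p\}$ can always be found — equivalently, choosing $S$ with enough freedom — is the heart of the matter, and is where I expect the hypothesis $\gcd(|G|,6)=1$ to be essential, via the abundance of strong complete mappings of $G$ guaranteed by Evans' theorem.
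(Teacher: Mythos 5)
There is a genuine gap, and it sits exactly where you flag it yourself. Your surgery framework is sound as far as it goes: Theorem \ref{mult1.thm} with $\phi(z)=2z$ on $\zed_5$ does give a strong frame starter in $(G\times\zed_5)\setminus(\{0\}\times\zed_5)$, and your calculation that a hole element can only be matched into a withdrawn coset, forcing the withdrawn pair of $S$ to have the form $\{p,2p\}$, is correct. But the proof then \emph{requires} a strong starter $S$ in $G$ containing a pair $\{p,2p\}$, with the further conditions that $p$ and $4p$ are not sums of other pairs of $S$, and you offer no argument that such an $S$ exists --- only the hope that $\gcd(|G|,6)=1$ will supply one ``via Evans' theorem.'' That hope does not close the gap: Evans' result produces strong complete mappings of $G$, not strong starters with a prescribed pair, and an arbitrary strong starter need not contain any pair $\{p,2p\}$. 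Indeed, the paper's own strong starter in $\zed_7$, namely $\{\{2,3\},\{5,1\},\{6,4\}\}$, contains no such pair (check: $2\cdot2=4\neq3$, $2\cdot3=6\neq2$, $2\cdot5=3\neq1$, $2\cdot1=2\neq5$, $2\cdot6=5\neq4$, $2\cdot4=1\neq6$), yet $G=\zed_7$ satisfies the hypothesis. So your method cannot be run on the given starter, and constructing a special starter with the required pair and sum conditions is essentially the whole difficulty, left unaddressed.

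Two smaller points. First, your claim that the repartition counts ``are forced'' is not quite right: writing $a,b,c_{2p},d$ for the numbers of cross-coset, hole-to-$p$-coset, inside-$2p$-coset, and inside-hole pairs, the element and difference counts admit three solutions, $(a,b,c_{2p},d)=(3,2,1,1)$, $(1,4,2,0)$ and $(5,0,0,2)$; only the last is genuinely excluded (it reproduces the nonexistent strong starter in $\zed_5$), so your chosen split is consistent but not unique, and its realizability in the second coordinates still needs an explicit check. Second, for context: the paper gives no proof of this theorem --- it is cited to Horton \cite{Ho71}, whose argument is a direct explicit construction of the strong starter in $\zed_5\times G$ (using that $2$ and $3$ are units in $G$), not a fill-the-hole modification of a frame starter; the paper even remarks that a frame-starter analogue of Horton's result is not known, which is a hint that the hole-filling route is harder than it looks. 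As it stands, your proposal is an interesting reduction of the problem to the existence of a special strong starter, but it is not a proof.
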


The previous results concerned strong starters in the direct product of two groups. 
The next theorem is closely related to a theorem of Gross and Leonard \cite{GL75}. It permits the construction of a strong starter in an abelian group $G$, given appropriate structures in a subgroup $H$ of $G$ and the quotient group $G / H$. 

\begin{theorem}
\label{GL.thm}
Suppose $G$ is an abelian group and $H$ is a subgroup of $G$. Suppose there is a strong  starter in the quotient group $G / H$ and suppose there is a strong complete mapping in
$H$. Then there is a strong frame starter in $G \setminus H$.
\end{theorem}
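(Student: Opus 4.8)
The plan is to lift the strong starter in the quotient $G/H$ to a strong frame starter in $G \setminus H$, using the strong complete mapping on $H$ to ``fill in'' each coset. This is structurally parallel to the construction in Theorem~\ref{mult1.thm}, except that the role of the base starter is now played by the starter in the quotient rather than by a frame starter in $G$ itself.

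Write $|H| = h$ and $|G/H| = n$, so that $|G| = nh$ and a strong starter in $G/H$ has $(n-1)/2$ pairs (in particular $n$ is odd). Let the strong starter consist of pairs of cosets $\{A_i, B_i\}$, $1 \le i \le (n-1)/2$, and fix arbitrary representatives $a_i \in A_i$ and $b_i \in B_i$ in $G$. Let $\phi : H \to H$ be the strong complete mapping. I would define the candidate frame starter $S$ to consist of all pairs
\[
\{\, a_i + z,\ b_i + \phi(z)\,\}, \qquad 1 \le i \le (n-1)/2,\ z \in H.
\]
There are $h \cdot (n-1)/2 = (nh-h)/2$ such pairs, which is the correct number, and each lies in $G \setminus H$ since its two entries lie in the distinct nonzero cosets $A_i, B_i$. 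The asymmetric application of $\phi$ --- to the $B$-side only --- is the crucial feature of the construction.

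The verification then splits into a ``coset level'' and a ``within-coset level,'' with the three defining properties of $\phi$ matched to the three conditions that must be checked. For coverage, for fixed $i$ the first entries $a_i + z$ run over all of $A_i$ and, since $\phi$ is a bijection of $H$, the second entries $b_i + \phi(z)$ run over all of $B_i$; as the pairs $\{A_i,B_i\}$ partition $(G/H)\setminus\{H\}$, the entries of $S$ exhaust $G \setminus H$ exactly once. For the difference condition, the difference of a lifted pair is $(a_i - b_i) + (z - \phi(z))$, which reduces modulo $H$ to the coset $A_i - B_i$; since $z \mapsto z - \phi(z)$ is a bijection of $H$, the differences for a fixed $i$ sweep out this entire coset exactly once. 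Because the signed coset differences $\pm(A_i - B_i)$ partition $(G/H)\setminus\{H\}$ by the starter difference property, the signed differences of $S$ exhaust $G \setminus H$ exactly once, so $S$ is a frame starter.

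Finally, for strongness, the sum of a lifted pair is $(a_i + b_i) + (z + \phi(z))$, which reduces modulo $H$ to $A_i + B_i$. Since $z \mapsto z + \phi(z)$ is a bijection of $H$, the sums for a fixed $i$ are distinct and fill the coset $A_i + B_i$; since the strong starter guarantees that the $A_i + B_i$ are pairwise distinct and all different from $H$, all sums of $S$ are distinct and none lies in $H$. I do not expect a genuine obstacle here: once the lift is set up, the argument is bookkeeping, and the main point to take care over is to reason purely with coset arithmetic in $G$ rather than assuming $G$ splits as $(G/H) \times H$, which need not hold. I would also remark that changing the representatives $a_i, b_i$ merely replaces $\phi$ by a translate, which is again a strong complete mapping, so the choice of representatives is immaterial.
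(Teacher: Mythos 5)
Your proposal is correct and is essentially the paper's own construction: the paper likewise lifts each starter pair $\{H+x_i,\, H+y_i\}$ of $G/H$ to the $h$ pairs $\{z + x_i,\ \phi(z) + y_i\}$, $z \in H$, applying the strong complete mapping to one side only. The only difference is that the paper leaves the verification to the reader, while you carry out the coset-level and within-coset bookkeeping explicitly (and your checks, including the use of the bijectivity of $z \mapsto \phi(z) \pm z$ and the oddness of $|G/H|$, are sound).
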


\begin{proof}
Suppose $|G|= g$ and $|H| = h$ and denote $n = g/h$. Let the elements of $H$ be  $z_1, \dots , z_h$. There is a strong starter $S$ in $G/H$, which consists of $(n-1)/2$ pairs, say $\{H + x_i,H+y_i\}$ for $1 \leq i \leq (n-1)/2$. Replace each pair $\{H + x_i,H+y_i\}$ in $S$ by an arbitrarily ordered pair $(H + x_i,H+y_i)$.
Then replace each such ordered pair by $h$ pairs $(z_j + x_i, \phi(z_j) + y_i)$, $1 \leq j \leq h$. Finally, replace each such ordered pair by an unordered pair. The $h(n-1)/2 = (g-h)/2$ resulting pairs comprise a strong frame starter in $G \setminus H$.\qed
\end{proof}

\begin{remark} In the result proven by Gross and Leonard \cite{GL75}, they assume (in addition to the hypotheses of Theorem \ref{GL.thm}) that there is a strong starter in $H$. They then conclude that there is a strong starter in $G$. In Theorem \ref{GL.thm}, we just omit the strong starter in $H$, and the result is the stated strong frame starter in $G \setminus H$. 
\end{remark}

\subsection{Nonexistence Results}

I now discuss  nonexistence results.  I review some previously published results and give a couple of new ones, using a unified approach. The proofs make use of the canonical homomorphism from $G$ to $G/H$. 
Suppose  $S$ is a strong starter in $G \setminus H$. Let $\phi : G \rightarrow G/H$ be the canonical homomorphism. The \emph{type} of an element $g \in G$ is $\phi(g)$, and the \emph{type} of a pair $\{x_i,y_i \}$ is defined to be $\{\phi(x_i), \phi(y_i)\}$. Clearly, no pair in $S$ contains two elements of the same type, nor does any pair in $S$ contain an element of type $0$, where $0$ is the identity in $G/H$.
By examining the homomorphic image of a strong starter under $\phi$, it is possible to write down various linear equations. If the resulting system of equations does not have a solution in non-negative integers, then the strong starter cannot exist.

\begin{theorem}\cite{DS80}
\label{t^5strong.thm}
Suppose $t$ is odd, $G$ is an abelian group of order $5t$ and $H$ is a subgroup of order $t$. Then there does not exist a strong frame starter in $G \setminus H$.
\end{theorem}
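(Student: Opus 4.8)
The plan is to analyze the homomorphic image of a hypothetical strong frame starter under the canonical homomorphism $\phi \colon G \rightarrow G/H$, exactly in the spirit of the discussion preceding the theorem. Since $|G/H| = 5t/t = 5$ is prime, $G/H \cong \zed_5$, and I would identify the nonzero types with the elements $1,2,3,4$ of $\zed_5$, recording that $4 = -1$ and $3 = -2$. A strong frame starter $S$ in $G \setminus H$ consists of $(5t-t)/2 = 2t$ pairs. As already noted, no pair has an endpoint of type $0$ and no pair has two endpoints of the same type, so every pair has a type $\{a,b\}$ with $a,b$ distinct and nonzero; there are six such types a priori.

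The key observation is that the \emph{strong} condition rules out certain pair types. If a pair $\{x_i,y_i\}$ has type $\{a,b\}$, then $\phi(x_i + y_i) = a + b$, and the first strong property ($x_i + y_i \notin H$) forces $a + b \neq 0$ in $\zed_5$. Among the six types, exactly two satisfy $a+b = 0$, namely the ``symmetric'' types $\{1,4\}$ and $\{2,3\}$ (that is, the types of the form $\{x,-x\}$); hence neither can occur in $S$. This leaves only the four types $\{1,2\}, \{1,3\}, \{2,4\}, \{3,4\}$, and I would let $a,b,c,d$ denote the respective numbers of pairs of each type.

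With only four types surviving, the rest is bookkeeping. First I would count endpoints: since the pairs of $S$ partition $G \setminus H$ and each nonzero type occurs exactly $t$ times, equating the number of endpoints of each type to $t$ gives $a+b = a+c = b+d = c+d = t$, whence $b=c$ and $a=d$. Next I would count signed differences: each pair of type $\{1,2\}$ or $\{3,4\}$ contributes differences of types $\{1,4\}$, while each pair of type $\{1,3\}$ or $\{2,4\}$ contributes differences of types $\{2,3\}$; since the $4t$ signed differences $\pm(x_i - y_i)$ partition $G \setminus H$, the number of type-$1$ differences must equal $t$, giving $a+d = t$. Combined with $a = d$, this yields $2a = t$, which is impossible because $t$ is odd. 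The only real content beyond routine counting is the reduction in the previous paragraph: recognizing that the strong (sum) condition forbids precisely the two self-negating types $\{1,4\}$ and $\{2,3\}$, and that deleting them collapses the counting system to one that demands $t$ be even. (Note that without the strong hypothesis all six types are available, and such frame starters of type $t^5$ do exist, e.g.\ the $\zed_{15}$ example above, so the strong condition is genuinely doing the work here.)
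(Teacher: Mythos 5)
Your proposal is correct and follows essentially the same route as the paper's proof: the canonical homomorphism onto $\zed_5$, the observation that the strong condition eliminates precisely the sum-zero types $\{1,4\}$ and $\{2,3\}$, and then endpoint and difference counts forcing a half-integer value of the odd number $t$. The only (immaterial) difference is which symmetric combination yields the contradiction --- you conclude $a_{\{1,2\}} = a_{\{3,4\}} = t/2$ via the type-$1$ differences, while the paper concludes $a_{\{1,3\}} = a_{\{2,4\}} = t/2$ via the type-$2$ and type-$3$ differences.
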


\begin{proof}
Clearly $G / H \cong \zed_5$. Let $\phi : G \rightarrow \zed_5$ be the canonical homomorphism.
Suppose $S = \{ \{x_i,y_i \} : 1 \leq i \leq 2t\}$ is a frame starter in $G \setminus H$. 
No pair in $S$ contains two elements of the same type, nor does any pair in $S$ contain an element of type $0$.

For $1 \leq i < j \leq 4$, let $a_{\{i,j\}}$ denote the number of pairs in $S$ of type $\{i,j\}$.
Observe that $a_{\{2,3\}} = a_{\{1,4\}} = 0$, because pairs of type $\{2,3\}$ or $\{1,4\}$ would have a sum that is of type $0$ (i.e., the sum would be in $H$). This is of course forbidden.

$G \setminus H$ contains $t$ elements of type $i$, for $1 \leq i \leq 4$. 
Thus the following four equations are obtained:
\begin{eqnarray}
\label{e11}a_{\{1,2\}} + a_{\{1,3\}}    & = & t\\
\label{e12}a_{\{1,2\}} + a_{\{2,4\}}   & = & t\\
\label{e13}a_{\{1,3\}} + a_{\{3,4\}}  & = & t\\
\label{e14}a_{\{2,4\}} + a_{\{3,4\}}   & = & t.
\end{eqnarray}

Now  consider pairs in $S$ that give rise to differences of a specified type. A pair of type $\{1,2\}$ or $\{3,4\}$ gives rise to one difference of type $1$ and one difference of type $4$. Also, a pair of type $\{1,3\}$, $\{2,4\}$ gives rise to one difference of type $2$ and one difference of type $3$. There are $t$ differences of each possible type $i$ (for $1 \leq i \leq 4$), so the following three equations result:
\begin{eqnarray}
\label{e16}a_{\{1,2\}} +  a_{\{3,4\}}     & = & t\\
\label{e17}a_{\{1,3\}} + a_{\{2,4\}}    & = & t.
\end{eqnarray}
Now, from (\ref{e11}) and (\ref{e12}), it follows  that $a_{\{1,3\}}  = a_{\{2,4\}}$.
Then, from (\ref{e16}), it results that $a_{\{1,3\}}  = a_{\{2,4\}} = t/2$. This is impossible, because $t$ is odd.\qed
\end{proof}

\begin{remark}
Theorem \ref{t^5strong.thm} rules out the existence of a strong frame starter in $G \setminus H$ for $G =\zed_{15}$ and $H = \{0,5,10\}$. However, Example \ref{z15.exam} provides orthogonal frame starters in $G \setminus H$.
\end{remark}

I don't know if the following very simple result is  new, but I was unable to find it recorded anywhere. 

\begin{theorem}
\label{newnonexist.thm}
Suppose $G$ is an abelian group of order $4t$ and suppose $H$ is a subgroup of $G$ of order $t$, where $t$ is even and $G / H \cong \zed_4 $. Then there is no strong frame starter in $G \setminus H$.
\end{theorem}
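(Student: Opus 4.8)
The plan is to follow the type-counting method used in the proof of Theorem~\ref{t^5strong.thm}, now with the canonical homomorphism $\phi : G \to G/H \cong \zed_4$. First I would record the structural restrictions that any frame starter $S$ in $G \setminus H$ must satisfy. Since every element of $G \setminus H$ is nonzero modulo $H$, no pair contains an element of type $0$; and since a pair whose two members share the same type would produce a difference lying in $H$, no pair has two elements of equal type. Hence every pair of $S$ has one of the three types $\{1,2\}$, $\{1,3\}$, $\{2,3\}$, and I will write $a$, $c$, $b$ for the numbers of pairs of these three types, respectively.

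Next I would bring in the strong hypothesis. A pair of type $\{1,3\}$ has $\phi(x)+\phi(y) = 1+3 = 0$ in $\zed_4$, so its sum $x+y$ lies in $H$; strongness forbids this, whence $c = 0$. Then I would count elements by type. Exactly $t$ elements of $G \setminus H$ carry each of the types $1$, $2$, $3$, and each such element lies in exactly one pair. Tallying the type-$1$, type-$3$, and type-$2$ elements gives
\begin{eqnarray*}
a + c &=& t,\\
b + c &=& t,\\
a + b &=& t.
\end{eqnarray*}
Solving this $3 \times 3$ system forces $a = b = c = t/2$. But strongness gives $c = 0$, so $t/2 = 0$, which is impossible since $t$ is even and positive. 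This is the desired contradiction.

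The argument is short, so there is no single hard step; the one thing to get right is the observation that the strong condition annihilates precisely the pair type ($\{1,3\}$) that the element count forces to occur with positive multiplicity. As a cross-check one can argue via differences instead: a pair of type $\{1,3\}$ is the only source of type-$2$ differences, since $\pm(1-3) \equiv 2 \bmod 4$, so covering the $t$ type-$2$ elements of $G \setminus H$ requires $2c = t$, again contradicting $c = 0$.

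Finally, the hypothesis that $t$ is even is exactly what is needed for $g-h = 3t$ to be even, a necessary condition for \emph{any} frame starter by Lemma~\ref{elem.lem}(1); under this hypothesis $t/2$ is an integer, so the forced count $c = t/2 \geq 1$ genuinely clashes with the strong condition $c = 0$, which is the real content of the theorem. (For $t$ odd there is nothing to prove, as no frame starter of type $t^4$ exists at all.)
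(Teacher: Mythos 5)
Your proposal is correct and follows essentially the same approach as the paper: the canonical homomorphism onto $\zed_4$, the observation that the only admissible pair types are $\{1,2\}$, $\{2,3\}$, $\{1,3\}$, and the fact that strongness annihilates the type $\{1,3\}$. Your primary contradiction comes from the three element-count equations forcing $a_{\{1,3\}} = t/2 > 0$, a trivial variant of the paper's argument, while your ``cross-check'' via type-$2$ differences (which can only arise from type-$\{1,3\}$ pairs) is precisely the contradiction the paper uses.
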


\begin{proof}
We are assuming $G / H \cong \zed_4$. Let $\phi : G \rightarrow \zed_4 $ be the canonical homomorphism.
Suppose $S$ is a frame starter in $G \setminus H$. 
A pair in $S$ cannot contain an element of type $0$ or a difference that is of type $0$, so the possible types of
pairs in $S$ are $\{1,2\}$, $\{2,3\}$  and $\{1,3\}$. Since the starter is strong, there cannot be any pairs of type $\{1,3\}$. However, this means that all pairs in $S$ have type $\{1,2\}$ or $\{2,3\}$. Consequently, there cannot be any pairs in $S$ that have a difference of type $2$, which is a contradiction. \qed 
\end{proof}

\begin{coro}
There is no strong frame starter in $\zed_{4t} \setminus \{0,4,8, \dots 4t-4\}$.
\end{coro}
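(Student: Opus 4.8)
The plan is to reduce this corollary to the two immediately preceding nonexistence results by making the group-theoretic structure explicit. First I would observe that here $G = \zed_{4t}$ and $H = \{0,4,8,\dots,4t-4\}$ is precisely the subgroup $4\zed_{4t}$ consisting of all multiples of $4$, which has order $t$. Since $H$ has index $4$ in $G$ and a quotient of a cyclic group is cyclic, we obtain $G/H \cong \zed_4$ for every value of $t$. So $|G| = 4t$, $|H| = t$, and $G/H \cong \zed_4$, which matches the hypotheses of Theorem \ref{newnonexist.thm} apart from the parity condition on $t$.

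The argument then splits on the parity of $t$. When $t$ is even, all hypotheses of Theorem \ref{newnonexist.thm} are satisfied (namely $G/H \cong \zed_4$ and $t$ even), so that theorem applies directly and there is no strong frame starter in $G \setminus H$.

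When $t$ is odd, Theorem \ref{newnonexist.thm} no longer applies, but in this case we do not even have an ordinary frame starter to begin with. Here I would invoke Lemma \ref{elem.lem}: with $g = 4t$ and $h = t$ we have $g - h = 3t$, which is odd when $t$ is odd, contradicting part (1) of that lemma. Alternatively, one can note that $2t$ is the unique element of order $2$ in $\zed_{4t}$, and $2t \notin H$ precisely when $t$ is odd, contradicting part (2). Either way, no frame starter, and hence no strong frame starter, exists in $G \setminus H$.

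There is no genuine obstacle in this corollary: the entire content is a matter of packaging the two cases so that they fall under Theorem \ref{newnonexist.thm} and Lemma \ref{elem.lem} respectively. The only point requiring care is that the corollary is stated for all $t$ while Theorem \ref{newnonexist.thm} assumes $t$ is even, so the odd case must be disposed of separately by the elementary divisibility (or order-$2$) obstruction.
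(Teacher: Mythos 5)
Your proof is correct and follows exactly the route the paper intends: the corollary appears immediately after Theorem \ref{newnonexist.thm} with no separate proof, the whole content being your identification of $H = \{0,4,\dots,4t-4\}$ as the index-$4$ subgroup of order $t$ with $G/H \cong \zed_4$. Your separate disposal of the odd-$t$ case (where $g-h = 3t$ is odd, so by Lemma \ref{elem.lem} not even a frame starter exists, let alone a strong one) correctly supplies the one detail the paper leaves implicit, since Theorem \ref{newnonexist.thm} assumes $t$ even.
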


\begin{remark}
Theorem \ref{newnonexist.thm} does not apply if $G / H \cong \zed_2 \times \zed_2$.
Indeed, Example \ref{z4z4.exam} shows that there is a strong frame starter in $(\zed_4 \times \zed_4) \setminus 
\{(0,0), (0,2), (2,0), (2,2)\}$.
\end{remark}

Here is another new result that has a similar but slightly more intricate proof.

\begin{theorem}
\label{newnonexist2.thm}
Suppose $G$ is an abelian group of order $6t$ and suppose $H$ is a subgroup of $G$ of order $t$. 
Then there is no strong frame starter in $G \setminus H$.
\end{theorem}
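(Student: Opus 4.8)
The plan is to follow the same homomorphic-image counting strategy used in the proof of Theorem \ref{newnonexist.thm}. Since $|G/H| = 6$ and the only abelian group of order $6$ is $\zed_6$, we have $G/H \cong \zed_6$. Let $\phi : G \to \zed_6$ be the canonical homomorphism and classify every pair of a hypothetical strong frame starter $S$ by its type $\{\phi(x_i),\phi(y_i)\}$. As before, a pair may not contain an element of type $0$, and may not repeat a type (else its difference lies in $H$); and because $S$ is strong, it may not satisfy $\phi(x_i)+\phi(y_i)=0$. The strong condition rules out precisely the types $\{1,5\}$ and $\{2,4\}$, leaving the eight admissible types $\{1,2\},\{1,3\},\{1,4\},\{2,3\},\{2,5\},\{3,4\},\{3,5\},\{4,5\}$. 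I would introduce a nonnegative integer variable $a_{\{i,j\}}$ counting the pairs of each admissible type.

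Next I would record two families of linear equations. The \emph{element equations} express that each nonzero coset contains exactly $t$ elements of $G \setminus H$: for each type $i \in \{1,\dots,5\}$, the pairs containing an element of type $i$ account for $t$ elements. The \emph{difference equations} express that the differences $\pm(x_i-y_i)$ likewise hit each nonzero coset exactly $t$ times. The essential feature here, and the analogue of the order-$2$ phenomenon exploited in Lemma \ref{elem.lem}, is that $3$ is the unique element of order $2$ in $\zed_6$, so a pair whose two elements differ by a type-$3$ element contributes \emph{two} differences of type $3$. Only pairs of type $\{1,4\}$ and $\{2,5\}$ do this, which yields the crucial doubled equation $2\,(a_{\{1,4\}} + a_{\{2,5\}}) = t$.

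The contradiction is then extracted purely algebraically. Comparing the type-$3$ element equation $a_{\{1,3\}} + a_{\{2,3\}} + a_{\{3,4\}} + a_{\{3,5\}} = t$ with the type-$2$ difference equation $a_{\{1,3\}} + a_{\{3,5\}} = t$ forces $a_{\{2,3\}} = a_{\{3,4\}} = 0$. Substituting this back collapses the remaining equations to $a_{\{1,4\}} + a_{\{4,5\}} = t$, $a_{\{2,5\}} = a_{\{4,5\}}$, and $a_{\{1,4\}} + a_{\{2,5\}} = t/2$, which together give $t = t/2$, impossible for $t \geq 1$. (When $t$ is odd, the doubled equation $2\,(a_{\{1,4\}} + a_{\{2,5\}}) = t$ already has no nonnegative integer solution, consistent with Lemma \ref{elem.lem} ruling out any frame starter whatsoever in that case.)

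I expect the main obstacle to be the bookkeeping of the difference equations rather than any conceptual difficulty: one must correctly track that an ordinary pair type splits its two oppositely-signed differences between two distinct cosets, whereas the type-$3$ differences are concentrated, and doubled, in a single coset. Getting this doubling right is exactly what produces the fatal factor of $1/2$, so the care lies entirely in the combinatorial accounting; once the system is written down, the linear algebra closing the argument is routine.
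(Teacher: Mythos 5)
Your proposal is correct and follows essentially the same argument as the paper's proof: the same canonical homomorphism onto $\zed_6$, the same eight admissible pair types with the strong condition eliminating $\{1,5\}$ and $\{2,4\}$, the same doubled type-$3$ difference equation $2\bigl(a_{\{1,4\}}+a_{\{2,5\}}\bigr)=t$, and the same key deduction $a_{\{2,3\}}=a_{\{3,4\}}=0$ from comparing the type-$3$ element count with the type-$2$ difference count. The only difference is the routine final combination of the linear equations, where you derive $t=t/2$ while the paper derives $3t/2=2t$ from the same system.
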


\begin{proof}
We have that $G / H \cong \zed_6$. Let $\phi : g \rightarrow \zed_6$ be the canonical homomorphism.
Suppose $S = \{ \{x_i,y_i \} : 1 \leq i \leq 5t/2\}$ is a frame starter in $G \setminus H$. 
No pair in $S$ contains two elements of the same type, nor does any pair in $S$ contain an element of type $0$.

For $1 \leq i < j \leq 5$, let $a_{\{i,j\}}$ denote the number of pairs in $S$ of type $\{i,j\}$.
Observe that $a_{\{2,4\}} = a_{\{1,5\}} = 0$, because pairs of type $\{2,4\}$ or $\{1,5\}$ would have a sum that is of type $0$ (i.e., the sum would be in $H$). This is of course forbidden.

$G \setminus H$ contains $t$ elements of type $i$, for $1 \leq i \leq 5$. 
Thus the following five equations are obtained:
\begin{eqnarray}
\label{e1}a_{\{1,2\}} + a_{\{1,3\}} + a_{\{1,4\}}   & = & t\\
\label{e2}a_{\{1,2\}} + a_{\{2,3\}} + a_{\{2,5\}}   & = & t\\
\label{e3}a_{\{1,3\}} + a_{\{2,3\}} + a_{\{3,4\}}  + a_{\{3,5\}} & = & t\\
\label{e4}a_{\{1,4\}} + a_{\{3,4\}} + a_{\{4,5\}}   & = & t\\
\label{e5}a_{\{2,5\}} + a_{\{3,5\}} + a_{\{4,5\}}   & = & t.
\end{eqnarray}

Now consider pairs in $S$ that give rise to differences of a specified type. A pair of type $\{1,4\}$ or $\{2,5\}$ gives rise to two differences of type $3$. A pair of type $\{1,3\}$ or $\{3,5\}$ gives rise to one difference of type $2$ and one difference of type $4$. Finally, a pair of type $\{1,2\}$, $\{2,3\}$, $\{3,4\}$ or $\{4,5\}$ gives rise to one difference of type $1$ and one difference of type $5$. There are $t$ differences of each possible type $i$ (for $1 \leq i \leq 5$), so the following three equations result:
\begin{eqnarray}
\label{e6}a_{\{1,2\}} + a_{\{2,3\}} + a_{\{3,4\}} + a_{\{4,5\}}    & = & t\\
\label{e7}a_{\{1,3\}} + a_{\{3,5\}}    & = & t\\
\label{e8}a_{\{1,4\}} + a_{\{2,5\}}  & = & t/2.
\end{eqnarray}
From (\ref{e3}) and (\ref{e7}), we see that $a_{\{2,3\}}= a_{\{3,4\}}= 0$.
Now, from (\ref{e6}) and (\ref{e8}), we obtain
\begin{eqnarray}
a_{\{1,2\}} + a_{\{1,4\}} + a_{\{2,5\}}  + a_{\{4,5\}} & = & 3t/2.
\end{eqnarray}
On the other hand, 
from (\ref{e2}) and (\ref{e4}), we obtain
\begin{eqnarray*}
a_{\{1,2\}} + a_{\{1,4\}} + a_{\{2,5\}}  + a_{\{4,5\}} & = & 2t.
\end{eqnarray*}
This is a contradiction.\qed
\end{proof}

The following theorem due to Stinson, which is reported in \cite{DL93}, has a more involved proof. It is somewhat similar to Theorem \ref{newnonexist.thm}. Note that Theorem \ref{t^4orth.thm} only applies to values  $t \equiv2 \bmod 4$, but the conclusion is stronger in that it rules out the existence of orthogonal frame starters in the relevant group. In fact, this the only nonexistence result of which I am aware that specifically applies to orthogonal frame starters.

\begin{theorem}\cite{DL93}
\label{t^4orth.thm}
If $t \equiv 2 \bmod 4$, then there does not exist a pair of orthogonal frame starters  in the group 
$\zed_{4t} \setminus \{0,4,8, \dots 4t-4\}$.
\end{theorem}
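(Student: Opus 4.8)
The plan is to analyze the configuration through the canonical homomorphism $\phi\colon G \to G/H \cong \zed_4$, exactly as in the proof of Theorem~\ref{newnonexist.thm}, but now keeping track of the adder as well. First I would record the type distribution of a single frame starter. As in Theorem~\ref{newnonexist.thm}, the only admissible pair-types are $\{1,2\}$, $\{2,3\}$ and $\{1,3\}$, and since $G\setminus H$ has exactly $t$ elements of each nonzero type while differences of type $2$ can arise only from pairs of type $\{1,3\}$, counting elements and differences of each type forces
\[ a_{\{1,2\}} = a_{\{2,3\}} = a_{\{1,3\}} = \tfrac{t}{2} \]
for both $S_1$ and $S_2$. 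The hypothesis $t \equiv 2 \bmod 4$ enters already here: $t/2$ is odd.

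Next I would use orthogonality to match the pairs. Writing the matched pairs with equal ordered differences, a pair has difference of type $2$ precisely when it has type $\{1,3\}$; since matched pairs share a difference, the $\{1,3\}$-pairs of $S_1$ are matched exactly to the $\{1,3\}$-pairs of $S_2$, producing exactly $t/2$ adders of type $2$ (the remaining matches pair $\{1,2\}$ with $\{2,3\}$ and yield adders of type $1$ or $3$, which automatically avoid $H$). For a matched pair of $\{1,3\}$-type, write the $S_1$-pair as $\{a,b\}$ and the $S_2$-pair as $\{c,d\}$ with $\phi(a)=\phi(c)=1$ and $\phi(b)=\phi(d)=3$. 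The key point is that, for these type-$2$ matches, the requirement that the adder avoid $H$ is genuinely restrictive: it forces
\[ a + c = b + d, \qquad\text{equivalently}\qquad 2\,\delta = \sigma - \tau, \]
where $\delta = b - c$ is the adder and $\sigma = a+b$, $\tau = c+d$ are the (necessarily in $H$) sums of the two $\{1,3\}$-pairs.

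Finally I would try to derive a contradiction from $t/2$ being odd. The $t/2$ type-$2$ adders are distinct elements of the coset $2+H$, and the relations $2\delta_k = \sigma_k - \tau_k$ tie them to the pair-sums in $H$; the goal is to show that these adders must in fact occur in an even number. I expect the main obstacle to be exactly this parity extraction: a naive count modulo $4$ (using types in $\zed_4$) turns out to be consistent, so the argument must use the finer structure of the Sylow $2$-subgroup. Since $t \equiv 2 \bmod 4$ gives $|G| = 4t \equiv 8 \bmod 16$, that subgroup is $\zed_8$, and I would refine the type function to a homomorphism $\psi\colon G \to \zed_8$ onto it, splitting each type-$2$ element into the fine types $2$ and $6$ (interchanged by adding the order-two element $2t$). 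Re-running the element, difference, and adder counts with these six fine types should produce a linear system whose solvability modulo $2$ fails precisely when the number of type-$2$ matches is odd, i.e.\ when $t \equiv 2 \bmod 4$. Carrying out this last step cleanly — rather than as a brute-force case analysis over the twelve admissible fine-type pairs — is the part I expect to require the most care.
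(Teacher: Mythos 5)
Your first two stages are correct and check out in detail: the census $a_{\{1,2\}}=a_{\{2,3\}}=a_{\{1,3\}}=t/2$ for each starter, the fact that matching by differences sends coarse-$\{1,3\}$ pairs of $S_1$ to coarse-$\{1,3\}$ pairs of $S_2$, the forced relation $a+c=b+d$ (equivalently $2\delta=\sigma-\tau$) with its $t/2$ distinct type-$2$ adders, and likewise the forced $\{1,2\}$-versus-$\{2,3\}$ matches, which in fact give exactly $t/2$ adders of type $1$ and $t/2$ of type $3$. But the argument stops precisely where the theorem lives: the contradiction is never derived, only a program is announced (``should produce a linear system whose solvability modulo 2 fails''), and you yourself flag its execution as the hard part. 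This is a genuine gap, not a deferral of routine bookkeeping. Worse, the route you propose is doubtful as stated: refining to $\psi\colon G\to\zed_8$ and counting \emph{numbers} of pairs and adders of each fine type, the distinctness of the adders yields only box constraints (e.g.\ the number $m$ of fine-type-$2$ adders satisfies $m\le t/2$ and $t/2-m\le t/2$), which carry no parity obstruction; one has to inject the group structure itself, not just type counts. (For calibration: the paper contains no proof of this theorem --- it cites \cite{DL93} and merely remarks the proof is ``more involved'' --- so I am comparing against what a complete argument requires.)

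Your diagnostic claim that ``a naive count modulo 4 turns out to be consistent,'' which drives you toward the $\zed_8$ refinement, is actually false, and it points to the missing idea: you never \emph{sum} the adders. Since $A_i=x_i-u_i=y_i-v_i$, adding the two expressions gives $2\sum_i A_i=\sum_i(x_i+y_i)-\sum_i(u_i+v_i)$, and each of the latter sums is the sum of all elements of $G\setminus H$, which is $0$ in $\zed_{4t}$ (the sum over $\zed_{4t}$ is $2t$, and the sum over $H$ is $2t(t-1)\equiv 2t \bmod 4t$ because $t-1$ is odd). Hence $\sum_i A_i\in\{0,2t\}$, whose image under $\phi\colon G\to G/H\cong\zed_4$ is $0$ since $t$ is even. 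On the other hand, the census you already established --- exactly $t/2$ adders of each coarse type $1$, $2$ and $3$ --- gives $\sum_i\phi(A_i)=3t\equiv 2\pmod 4$ precisely when $t\equiv 2\pmod 4$. This is the contradiction, obtained entirely at the coarse $\zed_4$ level with no appeal to the Sylow $2$-subgroup, and it even makes the distinctness of the adders (beyond $A_i\notin H$) superfluous. So the two concrete defects are: the decisive step is absent, and the heuristic used to plan it (``mod $4$ is consistent, so go to mod $8$'') misidentifies where the obstruction sits --- it lies in a weighted sum of adder types combined with the identity $2\sum_i A_i=0$, not in the solvability of an unweighted fine-type counting system.
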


\begin{coro} Suppose $G$ is an abelian group of order $4t$ and suppose $H$ is a  subgroup of $G$ of order $t$, where $t \equiv 2 \bmod 4$ is the product of distinct primes. Then there does not exist a pair of orthogonal frame starters in $G \setminus H$. 
\end{coro}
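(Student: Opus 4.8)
The plan is to reduce the general abelian case to the cyclic case already settled in Theorem~\ref{t^4orth.thm}, by showing that the hypotheses force $G \cong \zed_{4t}$ with $H$ equal to the subgroup $\{0,4,\dots,4t-4\}$. Since a pair of orthogonal frame starters consists in particular of two frame starters, I would begin by supposing such a pair exists in $G\setminus H$ and invoking Lemma~\ref{elem.lem}(2): then $G\setminus H$ contains no element of order $2$. This single structural constraint is what the whole argument exploits.

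Next I would analyze the $2$-part of $G$. Write $t = 2m$ with $m$ odd, so that $t\equiv 2\bmod 4$ gives $2\,\|\,t$; hence $|G| = 4t = 8m$, the Sylow $2$-subgroup $P$ of $G$ has order $8$, and the Sylow $2$-subgroup of $H$ has order $2$. In particular $H$ contains exactly one element of order $2$. The key observation is an involution count: among the abelian groups of order $8$, only $\zed_8$ has a unique element of order $2$, whereas $\zed_4\times\zed_2$ and $\zed_2\times\zed_2\times\zed_2$ have three and seven, respectively. All involutions of $G$ lie in $P$, so if $P\not\cong\zed_8$ then $G$ has at least three involutions; since $H$ can absorb only one of them, $G\setminus H$ would contain an element of order $2$, contradicting Lemma~\ref{elem.lem}(2). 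Hence $P\cong\zed_8$, and its unique involution, being the unique involution of $G$, must be the involution of $H$ and so lies in $H$, consistent with the constraint.

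Having pinned down $P\cong\zed_8$, I would handle the odd part. Write $G\cong P\times Q$, where $Q$ is the odd Sylow-complement of order $m$. Because $t$ is a product of distinct primes, $m$ is odd and squarefree, so $Q$ is an abelian group of squarefree order and is therefore cyclic, $Q\cong\zed_m$. Since $\gcd(8,m)=1$, it follows that $G\cong\zed_8\times\zed_m\cong\zed_{8m}=\zed_{4t}$ is cyclic. In a cyclic group there is a unique subgroup of each order dividing the group order, so $H$ is the unique subgroup of order $t$, namely $\langle 4\rangle = \{0,4,8,\dots,4t-4\}$. Because frame starters and the orthogonality conditions are phrased solely in terms of the group operation and membership in $H$, they are preserved under group isomorphism; thus a pair of orthogonal frame starters in $G\setminus H$ exists if and only if one exists in $\zed_{4t}\setminus\{0,4,\dots,4t-4\}$, which Theorem~\ref{t^4orth.thm} forbids. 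This gives the contradiction.

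I expect the main obstacle to be the involution-counting step: everything hinges on using $t\equiv 2\bmod 4$ to force $|P|=8$ and then eliminating $\zed_4\times\zed_2$ and $\zed_2^3$ via Lemma~\ref{elem.lem}(2). The squarefree hypothesis plays a separate but equally essential role, forcing the odd part to be cyclic; without it $G$ could fail to be cyclic even when $P\cong\zed_8$, and Theorem~\ref{t^4orth.thm} would not apply directly. Once both structural facts are established, the reduction to the cyclic case is routine.
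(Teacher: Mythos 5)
Your proposal is correct and takes essentially the same approach as the paper's proof: both use the absence of order-$2$ elements in $G \setminus H$ (Lemma~\ref{elem.lem}) to force the Sylow $2$-subgroup to be $\zed_8$, use the squarefree hypothesis to make the odd part cyclic, conclude $G \cong \zed_{4t}$ with $H = \{0,4,\dots,4t-4\}$, and then invoke Theorem~\ref{t^4orth.thm}. Your explicit involution count merely fills in the step the paper states tersely as ``there cannot be any elements of order $2$ in $G \setminus H$, so $K \cong \zed_8$.''
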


\begin{proof}
We must have $G \cong K \times L$, where $K$ has order $8$ and $L$ has (odd) order $t/2$. Also, $L$ must be cyclic since it is abelian and its order is the product of distinct primes.
We have $K \cong \zed_8, \zed_4 \times \zed_2$ or $\zed_2 \times \zed_2 \times \zed_2$. However, there cannot be any elements of order $2$ in $G \setminus H$. It follows that $K \cong \zed_8$ and  $H = \{0,4\} \times L$,
so $G = \zed_8 \times L$ is cyclic since $L$ has odd order. The conclusion then follows from Theorem \ref{t^4orth.thm}.\qed
\end{proof}

\section{Strong Frame Starters in Cyclic Groups}
\label{cyclic.sec}

Suppose $G$ is a cyclic group of order $g$ and $H$ is a subgroup of $G$ having order $h$, such that $g - h$ is even and $g/h \geq 4$. It is well-known that there is no strong starter in $\zed_9 \setminus \{0\}$. However, modulo this single exceptional case, it seems plausible that there is a strong frame starter in $G \setminus H$ unless its existence  is ruled out by one of the theorems proven in Section \ref{strong.sec}. More precisely, I propose following conjecture.

\begin{conjecture}
\label{main.conj}
Suppose that $G$ is a cyclic group of order $g$ and $H$ is a subgroup of $G$ having order $h$, such that $g - h$ is even and $g/h \geq 4$. Then there is a strong frame starter in $G \setminus H$ if and only if none of the following conditions holds:
\begin{enumerate}
\item $g = 2u$ and $h = 2t$, where $t$ is odd and $g/h \equiv 2,3 \bmod 4$,
\item $h$ is odd and $g = 5h$,
\item $g = 4h$ or $g = 6h$,
\item $h=1$ and $g = 9$.
\end{enumerate}
\end{conjecture}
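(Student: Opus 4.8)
Since the statement is a conjecture, what follows is a proposed line of attack rather than a complete argument; I indicate at the end where I expect the genuine difficulty to lie. Because $G$ is cyclic I may assume $G = \zed_g$ and that $H$ is the unique subgroup of order $h$, so that $G/H \cong \zed_n$ with $n = g/h \geq 4$. The necessity (``only if'') half is essentially already assembled from the nonexistence results proved above: condition (1) is exactly Theorem \ref{nonexist2.thm} (using $g/h = u/t$), the case $g = 5h$ with $h$ odd in condition (2) is Theorem \ref{t^5strong.thm}, the case $g = 4h$ of condition (3) is Theorem \ref{newnonexist.thm} (here $G/H \cong \zed_4$ holds automatically, and $g - h = 3h$ being even forces $h$ to be even), the case $g = 6h$ is Theorem \ref{newnonexist2.thm}, and condition (4) is the known nonexistence of a strong starter in $\zed_9$. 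The only thing to check is that these hypotheses specialize correctly to the cyclic setting, which is routine.

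The sufficiency (``if'') half is where all the work lies. The principal engine is Theorem \ref{GL.thm}: a strong starter in $\zed_n$ together with a strong complete mapping in $\zed_h$ produces a strong frame starter in $\zed_g \setminus H$. By Evans' characterization, $\zed_h$ admits a strong complete mapping exactly when $\gcd(h,6) = 1$, and strong starters in $\zed_n$ are known to exist for all odd $n \geq 7$ with $n \neq 9$. Since $g - h = h(n-1)$ being even forces $n$ odd whenever $h$ is odd, Theorem \ref{GL.thm} settles every case with $\gcd(h,6) = 1$ at once, the only residue being $n = 9$ (the value $n = 5$ being excluded by condition (2) when $h$ is odd). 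To reach $h$ divisible by $2$ or $3$, I would factor $h = 2^a 3^b k$ with $\gcd(k,6) = 1$ and strip off the factor $k$ by repeated applications of the multiplication construction of Theorem \ref{mult1.thm} with $G_2 = \zed_{p^e}$ for the prime powers $p^e$ exactly dividing $k$; each such $\zed_{p^e}$ (with $p \geq 5$) carries a strong complete mapping and leaves $n$ unchanged, and preserves cyclicity provided $p \nmid n$.

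The crux, and the step I expect to be the main obstacle, is supplying a complete and exactly-fitting family of base cases of type $(2^a 3^b)^n$ in $\zed_{2^a 3^b n}$. The essential difficulty is structural: since neither $\zed_{2^a}$ nor $\zed_{3^b}$ admits a strong complete mapping, none of the multiplication theorems available here can manufacture the $2$-part or $3$-part of $h$ while keeping the group cyclic, so these powers must be built into the seed itself. The ingredients on hand are the families of Wang, of \cite{DS80,SW80} and of \cite{AG78}, the small strong frame starters of Examples \ref{zed10} and \ref{z4z4.exam}, Corollary \ref{5h.coro}, and Theorem \ref{mult1-var.thm}, which enlarges $n$ by a factor admitting a strong complete mapping while holding $h$ fixed. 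Several delicate families remain to be covered by such seeds: the residual value $n = 9$ for all $h$; the case $n = 5$ with $h$ even, which is permitted because $g = 5h$ is forbidden only for odd $h$; the whole family $h \equiv 2 \bmod 4$, where condition (1) permits only $n \equiv 0, 1 \bmod 4$ (so one must realize each of $n = 5, 8, 9, 12, 13, \dots$ explicitly); and the cases where a prime $p \geq 5$ divides both $k$ and $n$, for which the coprimality requirement above breaks down and an alternative to the $\zed_{p^e}$ multiplication is needed. Arranging the seed constructions so that their multiplicative closure coincides precisely with the complement of the four excluded families is the heart of the problem, and I expect it to demand both new direct constructions and a systematic computer search for the smallest parameters.
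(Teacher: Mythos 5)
The statement you were asked to prove is a conjecture: the paper contains no proof of it, only the necessity direction (implicitly assembled from its nonexistence theorems) plus empirical hill-climbing verification for all $g \leq 100$ (Table \ref{table1}). Your ``only if'' half is correct and matches the paper's intent exactly: condition (1) is Theorem \ref{nonexist2.thm} with $g/h = u/t$, condition (2) is Theorem \ref{t^5strong.thm}, the case $g = 4h$ is Theorem \ref{newnonexist.thm} (your observation that $g-h$ even forces $h$ even, and that $G/H \cong \zed_4$ is automatic for cyclic $G$, is the right hypothesis check), $g = 6h$ is Theorem \ref{newnonexist2.thm}, and condition (4) is the stated nonexistence of a strong starter in $\zed_9$. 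You were also right to treat the ``if'' direction as the real problem and to present only a strategy for it.

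However, your sufficiency sketch contains a concrete false step that undermines its main engine. You assert as known that strong starters exist in $\zed_n$ for all odd $n \geq 7$ with $n \neq 9$. That is not a known theorem: it is precisely the $h=1$ case of Conjecture \ref{main.conj}, which the paper explicitly notes is contained in Horton's conjecture \cite{Ho89} and ``remains far from being solved'' (the $h=1,2$ cases are listed as open problems in \cite{DS92}). Consequently your claim that Theorem \ref{GL.thm} ``settles every case with $\gcd(h,6)=1$ at once'' is circular --- it presupposes an open instance of the very statement being proved --- and your diagnosis of the crux is mislocated: the difficulty is not confined to manufacturing the $2$- and $3$-parts of $h$ (though your structural point that Evans' characterization blocks multiplication through $\zed_{2^a}$ and $\zed_{3^b}$ while preserving cyclicity is correct and genuinely relevant); even the base layer $h=1$ of your proposed induction is open. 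The paper itself offers no route around this: its support for the conjecture is the finite computational verification for $g \leq 100$, not a proof strategy, so any honest treatment must flag the $h=1$ seed as an unresolved hypothesis rather than a known ingredient.
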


There is at least some fairly convincing empirical evidence for the correctness of Conjecture \ref{main.conj}. I have verified that there is a strong frame starter in $G \setminus H$, provided that the conditions stated in the conjecture do not hold, for all $g \leq 100$. The relevant strong frame starters were all constructed using the hill-climbing algorithm described in \cite{DS92a}. The pairs $(h,g)$, $g \leq 100$, for which strong starters in $\zed_g \setminus \zed_h$ exist are summarized in Table \ref{table1}.

Most of these strong frame starters were constructed extremely quickly. (I used a simple implementation in Maple to find them.) The only cases that required more time or multiple trials were those with $g = 5h$. Since these cases were a bit more difficult, I include examples of these strong starters in the Appendix. 

\begin{table}[t]
\caption{Parameters for which strong starters exist in $\zed_g \setminus \zed_h$, $g \leq 100$}
\label{table1}
\begin{center}
\begin{tabular}{r|l}
\multicolumn{1}{c|}{$h$} & \multicolumn{1}{c}{$g$} \\ \hline 
$1$ & $7, 11, 13,15, \dots , 99$\\
$2$ & $10, 16, 18, 24, 26, 32, 34, 40, 42, 48, 50, 
 56, 58, 64, 66, 72, 74, 80, 82, 88, 90, 96,98$\\
$3$ & $21, 27, \dots, 99$\\
$4$ & $20, 28, 32, \dots , 100$\\
$5$ & $35, 45, 55, 65, 75, 85, 95$\\
$6$ & $30, 48, 54, 72, 78, 96$\\
$7$ & $49, 63, 77, 91$\\
$8$ & $40, 64, 72, 80, 88, 96$\\
$9$ & $63,81,99$\\
$10$ & $50,80,90$\\
$11$ & $77,99$\\
$12$ & $60,84,96$ \\
$13$ & $91$ \\
$14$ & $70$ \\
$16$ & $80$\\
$18$ & $90$ \\
$20$ & $100$
\end{tabular}
\end{center}
\end{table}

The special case of Conjecture \ref{main.conj} with $h=1$ is contained in the following conjecture due to Horton \cite{Ho89}, which is not restricted to cyclic groups.  Although Horton's conjecture dates from the late 1980's, it still remains far from being solved.

\begin{conjecture}[Horton]
Suppose that $G$ is an abelian group of odd order $g$ where $g \geq 3$. Then there is a strong starter in $G$ if and only if $G \neq \zed_3, \zed_5, \zed_9$ or $\zed_3 \times \zed_3$.
\end{conjecture}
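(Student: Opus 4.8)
The plan is to prove the two directions of the conjecture separately. For the necessity direction I would rule out the four listed groups using the machinery already developed together with two small finite checks. The group $\zed_5$ is covered directly by Theorem~\ref{t^5strong.thm} applied with $t=1$: there $H=\{0\}$, $G/H\cong\zed_5$, and the counting argument forces $a_{\{1,3\}}=a_{\{2,4\}}=1/2$, which is impossible. For $\zed_3$ the only admissible pair is $\{1,2\}$, whose sum $1+2=0$ lies in $H=\{0\}$, so it cannot be strong. The two order-$9$ groups $\zed_9$ and $\zed_3\times\zed_3$ fall outside the scope of the modular nonexistence theorems of Section~\ref{strong.sec} (those demand orders $4t$, $5t$ or $6t$ with a prescribed quotient), so I would settle them by a direct exhaustive argument, which is harmless because a strong starter in a group of order $9$ has only four pairs.

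For the sufficiency direction I would argue constructively, combining a finite stock of base cases with the recursions recorded in Section~\ref{strong.sec}. The base cases are the classically known strong starters in $\eff_q$ for odd prime powers $q\ge 7$ with $q\neq 9$, which furnish the elementary abelian $p$-groups. Existence is then propagated by three tools: Gross's multiplication theorem (Theorem~\ref{gr.thm}), to adjoin a direct factor of order coprime to $6$; Horton's recursion \cite{Ho71}, to adjoin a factor $\zed_5$; and the Gross--Leonard strengthening of Theorem~\ref{GL.thm}, which turns a strong starter in $G/H$, a strong complete mapping in $H$, and a strong starter in $H$ into a strong starter in $G$. Evans's characterization pins down exactly when the required strong complete mappings are available: for odd $G$, precisely when the Sylow $3$-subgroup is not nontrivial cyclic.

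The hard part will be the cyclic prime-power factors at $p=3$ and $p=5$. Each recursion above either forbids the relevant prime or builds only an \emph{elementary abelian} factor: Gross's theorem and Horton's recursion require $\gcd(\cdot,6)=1$, and the latter adjoins $\zed_5$ as a direct summand, yielding $(\zed_5)^k$ rather than the cyclic $\zed_{5^k}$; the multiplication theorems (Theorems~\ref{mult1.thm}, \ref{mult1-var.thm} and \ref{GL.thm}) require a strong complete mapping, which by Evans's theorem a cyclic $3$-group such as $\zed_{27}$ or $\zed_{81}$ does not admit. The direct field constructions miss these groups as well, since the additive group of $\eff_{3^k}$ is $(\zed_3)^k$ and never the cyclic $\zed_{3^k}$. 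Thus the scaffolding reaches the elementary abelian $3$- and $5$-parts but not the cyclic groups $\zed_{3^k}$ ($k\ge 3$) or $\zed_{5^k}$ ($k\ge 2$); the empirical data in Table~\ref{table1} show these groups do have strong starters, but at present only ad hoc or computer searches produce them. Supplying a uniform construction for these cyclic prime powers is exactly what keeps the conjecture open, so I would expect the necessity direction and the generic part of sufficiency to be routine, while the cyclic prime powers at $3$ and $5$ remain the genuine and still-unresolved obstacle.
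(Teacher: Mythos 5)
There is a genuine gap here, and it is one you yourself concede in your final paragraph: the statement is an \emph{open conjecture}, not a theorem. The paper offers no proof of it --- it explicitly says Horton's conjecture ``still remains far from being solved'' --- so no proof attempt can be checked against a paper proof, and your proposal does not close the conjecture either. What you actually establish is only the necessity direction: the argument for $\zed_5$ via Theorem \ref{t^5strong.thm} with $t=1$ is correct (the counting forces $a_{\{1,3\}}=a_{\{2,4\}}=1/2$), the $\zed_3$ observation is trivially right, and the two groups of order $9$ can indeed be dispatched by finite search. But necessity was never in doubt; the content of the conjecture is sufficiency, and there your sketch is a program, not a proof.

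Moreover, the unreachable region is substantially larger than the ``cyclic prime powers at $3$ and $5$'' you single out. By Evans's characterization, \emph{every} odd abelian group whose Sylow $3$-subgroup is nontrivial cyclic lacks a strong complete mapping --- so $\zed_3\times\zed_m$ with $\gcd(m,3)=1$, e.g.\ $\zed_{15}$ and $\zed_{21}$, is already beyond Theorems \ref{mult1.thm}, \ref{mult1-var.thm} and \ref{GL.thm}; Gross's Theorem \ref{gr.thm} and Horton's recursion \cite{Ho71} both demand $\gcd(\cdot,6)=1$ and so cannot touch the prime $3$ at all; and the Gross--Leonard route \cite{GL75} cannot bootstrap through $H\cong\zed_3$ or $\zed_9$ since those groups have no strong starters to feed it. Even $(\zed_5)^2$ is not produced by Horton's recursion as you claim, since that recursion needs a strong starter in the base group and $\zed_5$ has none (one would instead invoke a direct construction in $\eff_{25}$). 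Table \ref{table1} confirms that strong starters exist in $\zed_{15}$, $\zed_{21}$, $\zed_{27}$, etc., but only via hill-climbing searches, which is precisely the paper's point: the recursive scaffolding of Section \ref{strong.sec} provably cannot cover all odd abelian groups, and no uniform construction is known. Your write-up is honest about this, but it should be framed as an analysis of why the conjecture is open, not submitted as a proof of it.
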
 

Also, note that the special cases of Conjecture \ref{main.conj} with $h=1,2$ are also mentioned in the 1992 survey article \cite{DS92} as open problems.

\section{Discussion and Conclusion}
\label{discuss.sec}

in 1981, Jeff Dinitz and I developed a hill-climbing algorithm to find strong starters in cyclic groups; see \cite{DS81a}. This was notable as being the first successful use of a hill-climbing algorithm to find a nontrivial combinatorial structure. At the time, we did not even know what a hill-climbing algorithm was, so the title of \cite{DS81a} instead refers to a ``fast'' algorithm. The algorithm is a randomized heuristic algorithm and there is no proof that it will actually succeed in finding a given strong starter. However, it has turned out to be very successful in practice. This hill-climbing algorithm can easily be modified to search for strong frame starters, as described in \cite{DS92a}. 

I find it very intriguing that it is seemingly very easy in practice to construct strong frame starters  in any desired $G \setminus H$ (assuming that the existence of such a frame starter is not ruled out by a non-existence theorem).
However, more than forty years after the development of this hill-climbing algorithm, there is still no proof that all ``possible'' strong starters or strong frame starters exist. Perhaps this is due in part to a lack of attention paid to investigating this problem in recent years. Hopefully by highlighting this problem, it may lead to future progress.

Another observation in the course of writing this paper that I find interesting is that there turned out be nonexistence results for strong frame starters that were not previously recorded.  I suppose this is because the corresponding frames were constructed by other techniques, so the strong frame starters were not required for this specific purpose. In any event, there turned out to be interesting mathematics involved in taking a new look at an old problem.

As far as future research is concerned, there is still much to be said about existence or nonexistence of strong frame starters in non-cyclic groups. A couple of examples I have mentioned in this paper provide hints that there are interesting lines of investigation to be addressed. 
One such example is the existence of orthogonal frames starters in $\zed_{15} \setminus \{0,5,10\}$, 
when a strong frame starter does not exist. Another is the existence of a strong frame starter in $\zed_{16} \setminus \{(0,0), (0,2), (2,0), (2,2) \}$ when there is no strong frame starter in $\zed_{16} \setminus \{0,4,8,12\}$.

Finally, I should also mention that there are also interesting open problems regarding \emph{skew} frame starters, a topic that I have not mentioned in this paper.

\appendix

\section{Some Strong Frame Starters in $\zed_{5t} \setminus \{0,5,10, \dots 5t-5\}$}

Table \ref{tablexx} provides examples of strong frame starters in $\zed_{5t} \setminus \{0,5,10, \dots 5t-5\}$ for $t$ even, $4 \leq t \leq 20$. Note that the case $t =2$ was done in Example \ref{zed10}. Also, the case $t = 14$ can  be obtained from Remark \ref{5h.rem}.

  \begin{table}
   \caption{Strong frame starters in $\zed_{5t} \setminus \{0,5,10, \dots 5t-5\}$}
   \label{tablexx}
\begin{center}
\begin{tabular}{r|l}
  $t$ & \multicolumn{1}{c}{strong frame starter}  \\ \hline
 4 &   $\{ 16, 17\},  \{ 1, 3\}, \{ 9, 12\},\{ 2, 6\},\{ 8, 14\}, \{ 11, 18\}, \{ 19, 7\}, \{ 4, 13\}$
\\ \hline
  6  &  $\{  3, 4 \}, \{26, 28\}, \{9, 12\}, \{17, 21\}, \{ 8, 14\}, \{16, 23\}, \{19, 27\}, \{2, 11\}$\\
& $\{18, 29\}, \{1, 13\},\{24, 7\}, \{22, 6\}$
\\ \hline
  8  & $ \{  28, 29 \}, \{17, 19\}, \{8, 11\}, \{39, 3\}, \{ 16, 22\}, \{ 2, 9\}, \{24, 32\}, \{27, 36\}$\\
& $\{1, 12\}, \{26, 38\},\{34, 7\}, \{4, 18\}, \{21, 37\}, \{6, 23\},\{13, 31\}, \{14, 33\}$\\ \hline
 10  &  $\{  26,27 \}, \{7,9\}, \{28,31\}, \{34,38\}, \{ 11,17\}, \{ 16,23\}, \{44,2\}, \{32,41\}$\\
& $\{3,14\}, \{36,48\},\{49,12\}, \{42,6\}, \{13,29\}, \{1,18\},\{19,37\}, \{39,8\}$\\
& $\{33,4\}, \{21,43\},\{24,47\}, \{22,46\}$\\ \hline
 12  &  $\{  46, 47 \}, \{41, 43\}, \{58, 1\}, \{2, 6\}, \{ 21, 27\}, \{ 52, 59\}, \{23, 31\}, \{17, 26\}$\\
& $\{56, 7\}, \{32, 44\},\{9, 22\}, \{ 19, 33\}, \{38, 54\}, \{11, 28\},\{24, 42\}, \{57, 16\}$\\
& $\{18, 39\}, \{51, 13\},\{49, 12\}, \{29, 53\}, \{48, 14\}, \{37, 4\},\{8, 36\}, \{34, 3\}$
\\ \hline
 14  &  $\{  43, 44 \}, \{56, 58\}, \{3, 6\}, \{19, 23\}, \{ 11, 17\}, \{  42, 49\}, \{28, 36\}, \{12, 21\}$\\
& $\{68, 9\}, \{2, 14\},\{53, 66\}, \{ 24, 38\}, \{51, 67\}, \{52, 69\},\{29, 47\}, \{27, 46\}$\\
& $\{18, 39\}, \{41, 63\},\{8, 31\}, \{62, 16\}, \{48, 4\}, \{7, 34\},\{33, 61\}, \{54, 13\}$\\ 
& $\{26, 57\}, \{ 32, 64\},\{59, 22\}, \{37, 1\}$\\ \hline
 16  &  $\{  36, 37 \}, \{51, 53\}, \{28, 31\}, \{57, 61\}, \{ 78, 4\}, \{  7, 14\}, \{33, 41\}, \{64, 73\}$\\
& $\{58, 69\}, \{27, 39\},\{19, 32\}, \{ 54, 68\}, \{56, 72\}, \{66, 3\},\{74, 12\}, \{77, 16\}$\\
& $\{1, 22\}, \{ 67, 9\},\{29, 52\}, \{ 2, 26\}, \{21, 47\}, \{11, 38\},\{34, 62\}, \{17, 46\}$\\ 
& $\{18, 49\}, \{ 71, 23\},\{43, 76\}, \{59, 13\}, \{8, 44\}, \{ 42, 79\},\{48, 6\}, \{24, 63\}$\\ \hline
 18  &  $\{  58, 59 \}, \{12, 14\}, \{89, 2\}, \{22, 26\}, \{ 68, 74\}, \{  66, 73\}, \{49, 57\}, \{42, 51\}$\\
& $\{61, 72\}, \{36, 48\},\{69, 82\}, \{ 84, 8\}, \{78, 4\}, \{86, 13\},\{88, 16\}, \{19, 38\}$\\
& $\{46, 67\}, \{  32, 54\},\{53, 76\}, \{ 39, 63\}, \{18, 44\}, \{7, 34\},\{9, 37\}, \{62, 1\}$\\ 
& $\{21, 52\}, \{ 11, 43\},\{23, 56\}, \{87, 31\}, \{83, 29\}, \{ 77, 24\},\{3, 41\}, \{79, 28\}$\\ 
& $\{6, 47\}, \{ 81, 33\},\{64, 17\}, \{27, 71\}$\\ \hline
 20  &  $\{  76, 77 \}, \{7, 9\}, \{94, 97\}, \{79, 83\}, \{ 66, 72\}, \{  37, 44\}, \{84, 92\}, \{89, 98\}$\\
& $\{48, 59\}, \{36, 48\},\{58, 71\}, \{ 54, 68\}, \{6, 22\}, \{2, 19\},\{93, 11\}, \{99, 18\}$\\
& $\{13, 34\}, \{  42, 64\},\{23, 46\}, \{ 27, 51\}, \{31, 57\}, \{16, 43\},\{33, 61\}, \{52, 81\}$\\ 
& $\{36, 67\}, \{ 17, 49\},\{88, 21\}, \{4, 38\}, \{78, 14\}, \{ 26, 63\},\{3, 41\}, \{69, 8\}$\\ 
& $\{91, 32\}, \{ 86, 28\},\{53, 96\}, \{12, 56\}, \{1, 47\}, \{ 82, 29\},\{39, 87\}, \{24, 73\}$
\end{tabular}
\end{center}
\end{table}

\end{document}